\DeclareMathOperator{\diag}{diag}
\DeclareMathOperator{\dom}{dom}
\DeclareMathOperator{\ran}{ran}
\DeclareMathOperator{\sgn}{sgn}
\DeclareMathOperator{\im}{Im}
\DeclareMathOperator{\Span}{span}
\DeclareMathOperator{\re}{Re}
\newcommand{\R}{\mathbb{R}}
\renewcommand{\i}{{\rm i}}
\newcommand{\C}{\mathbb{C}}
\newcommand{\Gc}{\mathcal{G}}
\newcommand{\Xc}{\mathcal{X}}
\newcommand{\eL}{\mathcal{L}}
\newlength\@SizeOfCirc%
\newcommand{\CricArrowRight}[1]{%
    \setlength{\@SizeOfCirc}{\maxof{\widthof{#1}}{\heightof{#1}}}%
    \tikz [x=1.0ex,y=1.0ex,line width=.15ex, draw=black]%
        \draw [->,anchor=center]%
            node (0,0) {#1}%
            (0,1.2\@SizeOfCirc) arc (85:-240:1.2\@SizeOfCirc);%
}%
\newenvironment{smallbmatrix}%          environment name
{\left[\begin{smallmatrix}}%            begin code
{\end{smallmatrix}\right]}%             end code
\newtheorem{definition}{Definition}[section]
\newtheorem{proposition}[definition]{Proposition}
\newtheorem{lemma}[definition]{Lemma}
\newtheorem{corollary}[definition]{Corollary}
\newtheorem{remark}[definition]{Remark}
\theoremstyle{definition}
\title{Stability and passivity for a class of distributed port-Hamiltonian networks}
\author{Hannes Gernandt\thanks{Fraunhofer Research Institution for Energy Infrastructures and Geothermal Systems IEG, Gulbener Straße 23, 03046 Cottbus, Germany  (\texttt{hannes.gernandt@ieg.fraunhofer.de}).}
\and Dorothea Hinsen\thanks{ Institut f\"{u}r Mathematik, Technische Universität Berlin, Stra\ss e des 17.\ Juni 136, 10623 Berlin, Germany   (\texttt{hinsen@math.tu-berlin.de}).} }
\begin{document}
\maketitle

\begin{abstract}
We consider a class of infinite dimensional (distributed) pH systems which is invariant under Kirchhoff-type interconnections and prove exponential stability and a power balance equation for classical solutions. The results are illustrated for power networks that incorporate distributed transmission line models based on the telegraph  equations.
\end{abstract}

\section{Introduction}
 In the past years, the port-Hamiltonian (pH) modeling approach to systems theory has received a lot of attention. It is well known to guarantee passivity and stability but it has various other benefits such as its flexibility allowing for multiphysical modeling, availability of structure preserving model order reductions, as well as efficient numerical methods, see \cite{UnM22} for a recent overview.

In this note, we study pH models for partial differential equations (PDEs) or also called distributed parameter pH (dpH) systems in the literature, see e.g.\ the monograph \cite{JacZ12} for a concise introduction as well as the recent survey \cite{RasCSS20} which summarizes twenty years of developments in dpH systems. 

We consider the dpH system formulation from \cite{JacZ12,Vill07} which is the following class of abstract Cauchy problems on the Hilbert space $X=L^2([a,b],\C^n)$ of all measurable and square-integrable functions $x:[a,b]\rightarrow\C$ equipped with the scalar product $\langle x,y\rangle_{L^2}:=\int^b_a \overline{y(\xi)}^\top x(\xi)d \xi$
\begin{align}
\begin{split}
\tfrac{d}{dt}z(t)=AHz(t), \quad z(0)=z_0,\quad \text{ with } z_0 \in \C^n, \label{eq:ph_op}  
\end{split}
\end{align}
where $A:X\supset \dom A \rightarrow X$ is a  closed, densely defined linear operator  with domain $\dom A$ and $H$ is a bounded uniformly positive operator which are given by 
\begin{align}
\label{eq:A_JaZw_intro}
Ax(\xi):=P_1\frac{d}{d \xi}x(\xi)+P_0x(\xi),\quad  (Hx)(\xi):=\mathcal{H}(\xi)x(\xi),
\end{align}
where $P_1\in\mathbb{C}^{n\times n}$ is Hermitian and invertible, $P_0\in\mathbb{C}^{n\times n}$  skew-adjoint i.e. $P_0 = P^H_0$ with $P_0^H$ is the conjugate transpose of $P_0$ and the function $\mathcal H:[a,b]\rightarrow \mathbb C^{n\times n}$ is assumed to be measurable and satisfies $m I_n\leq \mathcal H(\xi)\leq M I_n$ for some constants $0<m<M$ for almost all $\xi\in[a,b]$. One key idea of pH systems is the concept of \emph{ports} which open the system for control, observation or interconnection. To include ports in \eqref{eq:ph_op} it is common to introduce the \emph{boundary flow} $f_{\partial}$ and \emph{boundary effort} $e_{\partial}$ which are given in terms of $z(t,\xi):=(z(t))(\xi)$ by 
\begin{align*}
%    \label{def_bef}
\begin{bmatrix}f_{\partial}(t)\\ e_{\partial}(t)\end{bmatrix}&:= \frac{1}{\sqrt{2}}\begin{bmatrix} P_1& -P_1\\ I_n & I_n
\end{bmatrix} \begin{bmatrix}
\mathcal{H}(b)z(t,b)\\ \mathcal{H}(a)z(t,a)
\end{bmatrix},
\end{align*}
see \cite{JacZ12,Vill07} and \cite[Definition 3.5]{LeGZM05}. Using these flow and effort variables, we obtain the following boundary control system, see e.g.\ \cite[Section~11.3]{JacZ12},
\begin{align}
\nonumber
\frac{\partial }{\partial t}z(t,\xi)&=P_1\frac{\partial}{\partial \xi}(\mathcal H(\xi)z(t,\xi))+P_0(\mathcal H(\xi)z(t,\xi)),\quad  W_{B,2}\begin{bmatrix}f_{\partial}(t)\\ e_{\partial}(t)\end{bmatrix}=0,\\
u(t)&=W_{B,1}\begin{bmatrix}f_{\partial}(t)\\ e_{\partial}(t)\end{bmatrix},\quad \quad y(t)=W_C\begin{bmatrix}f_{\partial}(t)\\ e_{\partial}(t)\end{bmatrix},\label{eq:ph_bc_int} 
\end{align}
with $W_{B,1},W_{B,2},W_C\in\C^{n\times 2n}$. In the study of the existence and solutions of boundary control systems, one considers the operator $A$ given by \eqref{eq:A_JaZw_intro} with domain 
\[
\dom A:=\left\{ x\in L^2([a,b],\C^n) ~\bigg|~ H x\in H^1([a,b],\C^n), \frac{1}{\sqrt{2}}\begin{smallbmatrix}
W_{B,1}\\ W_{B,2}
\end{smallbmatrix}\begin{bmatrix} P_1& -P_1\\ I_n & I_n
\end{bmatrix} \begin{bmatrix}
\mathcal{H}(b)x(b)\\ \mathcal{H}(a)x(a)
\end{bmatrix}=0\right\}.
\]
Here $H^1([a,b],\C^n)$ denotes the Sobolev space of functions whose weak derivative exists and is square integrable.  Note that there are characterizations for $A$ being the generator of a contraction or an exponentially stable semigroup using only the matrices $W_{B,1}$ and $W_{B,2}$, see e.g\ \cite[Theorem 7.2.4, Lemma 9.1.4]{JacZ12}. Furthermore, power-balance equations for twice continuously differentiable input functions $u$ were provided in \cite{JacZ12,Vill07}.

The main contribution of this work is to provide a general class of operators for which we can guarantee contractivity or exponential stability, as well as passivity for classical solutions and these properties are preserved for typical classes of Kirchhoff-type interconnections. The introduced class of distributed pH networks appears in several applications ranging from gas transport networks \cite{EggKLSMM18}, to interconnections of beams as well as electrical transmission line networks, see \cite{JacZ12}.

Besides our extension of the pH system class some recent extensions of the pH system class were done in the following directions: \cite{Skr21,KurZ15} study spatially higher dimensional systems, \cite{Aug20} consider higher order spatial derivatives, extension of Dirac structures to infinite dimensional spaces were studied in \cite{KurZVB10} and extensions of the system node approach to pH systems \cite{ReiPS23} and stabilization for a class of dissipative systems \cite{LeGRWLM2022}. 

In our class of distributed pH systems, compared to \eqref{eq:A_JaZw_intro}, the considered operators $A$ have the following block diagonal structure
\[\mathcal{H}=\begin{bmatrix}
\mathcal{H}_1&0\\0&\mathcal{H}_2
\end{bmatrix}
,\quad P_1=\begin{bmatrix}
0&-I_d\\-I_d&0
\end{bmatrix},\quad P_1\frac{\partial}{\partial \xi}=\begin{bmatrix}
0&-D\\-D&0
\end{bmatrix},\quad n=2d,
\]
and the derivative operator $D$ is a closed and densely defined operator in $L^2([a,b],\C^d)$. The domain of $D$ lies between the domain of the \emph{minimal realization} $D_{\min}$ of the spatial derivative and the \emph{maximal realization} $D_{\max}$ given by
\begin{subequations}
\begin{align}
\label{def:deriv}
&D_{\min}x=\tfrac{\partial}{\partial\xi}x,\quad \dom D_{\min}:=\{x\in H^1([a,b],\C^d) ~|~ x(a)=x(b)=0\},\\ %% yes 0 is correct
\label{def:deriv_max}
&D_{\max}x=\tfrac{\partial}{\partial\xi}x,\quad \dom D_{\max}:= H^1([a,b],\C^d).
\end{align}
\end{subequations}
Recall, that the \emph{adjoint} $A^*$ of a closed densely define linear operator $A$ in a Hilbert space $X$ is determined by the following domain and action
\[
\dom A^*:=\{y\in X~|~ \langle Ax,y\rangle_{X} =\langle x,w\rangle_{X}~ \text{for some $w \in X$ and all $x\in\dom A$} \},\quad A^*y:=w.
\]
Then it is an easy consequence that for the operators in \eqref{def:deriv} one has $D_{\min}^*=-D_{\max}$.

In the applications mentioned above, incorporating  dissipation requires a replacement of the skew-adjoint matrix $P_0$ by some bounded non-positive diagonal operator 
\[
P_0=\begin{bmatrix}
-G&0\\0&-R
\end{bmatrix},
\]
where $G,R:X\rightarrow X$ are bounded nonnegative linear operators between $X=L^2([a,b],\C^d)$. 

This motivates the following modifications and generalizations of the pH model \eqref{eq:ph_bc_int}:
\begin{itemize}
 \item[(M1)] Instead of $A$ given by \eqref{eq:ph_op}, we consider an arbitrary Hilbert space $X$ with scalar product $\langle \cdot,\cdot \rangle$ and define on $X \times X$ the block operator
    \[
    A=\begin{bmatrix}
    -G&D\\D&-R
    \end{bmatrix},
    \]
    where $D:X\supset\dom D\rightarrow X$ is closed, densely defined and skew-symmetric, i.e.\ $D\subseteq -D^*$. Furthermore, we assume that $G,R:X\rightarrow X$ are bounded and  fulfill $\langle Rx,x\rangle\geq 0$ and $\langle Gx, x \rangle \geq 0$ for all $x\in X$;
    \item[(M2)] For $H:X\times X\rightarrow X\times X$ bounded there exists $m>0$ such that $\langle H z,z\rangle\geq m\|z\|^2$ holds for all $z \in X \times X$;
    
    \item[(M3)] Instead of specifying the domain of $A$ in \eqref{eq:ph_op} in terms of the boundary flow and boundary effort $f_{\partial}$ and $e_{\partial}$, we assume that the symmetric operator $S=\begin{smallbmatrix}
    0&\i D\\\i D&0
    \end{smallbmatrix}$ has a so called \emph{boundary triplet} $\{\Xc,\Gamma_0,\Gamma_1\}$. 
   \end{itemize}

Assuming (M1)-(M3), we consider in Section \ref{sec:BCS} the following boundary control system 
\begin{align}
\label{eq:gen_ph}
\dot z(t)=\begin{bmatrix}
    -G&D^*\\D^*&-R
    \end{bmatrix}Hz(t),\quad \Gamma_0Hz(t)=u(t),\quad \Gamma_1Hz(t)=y(t),
\end{align}
where the mappings $\Gamma_0,\Gamma_1:\dom(A^*H)\rightarrow\Xc$ are given by a boundary triplet. Such systems were studied in a more general context in \cite{MalS06,MalS07} and we apply their results to derive a power-balance equation for classical solutions $z$ of \eqref{eq:gen_ph}
\begin{align}
\label{eq:PBE_int}
\langle H z(t),z(t)\rangle-\langle H z(0),z(0)\rangle\leq 2\int_0^t\re \langle u(\tau),y(\tau)\rangle d\tau-\left\langle\begin{bmatrix}R&0\\0&G
\end{bmatrix}H z(t),H z(t)\right\rangle.
\end{align}
The power balance equation \eqref{eq:PBE_int} implies that the considered boundary control systems are \emph{impedance passive} in the sense of \cite{Sta02}.  
Closely related to this are recent derivations of power balance equations for weak solutions \cite{EggKLSMM18} and mild solutions \cite{PhiSFMW21} for particular classes of pH systems.

Besides passivity, Section~\ref{sec:BCS} contains results on the stability of the strongly continuous semigroups generated by 
\[
\begin{bmatrix}
    -G&D^*\\D^*&-R
    \end{bmatrix}H|_{\ker( \Gamma_0H)}.
\]
It is shown that the generated semigroups are  exponentially stable if $R$ is \emph{uniformly positive}, i.e.\ there exists $r_0>0$ such that 
\[
\langle R x,x\rangle \geq r_0\|x\|^2,\quad \text{for all $x\in X$}
\]
and if either $D^*$ is surjective, or compact resolvent assumptions hold. These assumptions are fulfilled in many applications such as models for electrical or fluid transmission networks or interconnections of beams. 

For classical distributed pH systems \eqref{eq:ph_bc_int} from \cite{JacZ12,Vill07}, the operator $P_0$ is assumed to be skew-adjoint. Here we allow for $P_0$ to be dissipative and we use this dissipativity to show exponential stability. Furthermore, in the rich literature on exponential stability of pH systems \eqref{eq:ph_bc_int}, the stability is characterized in terms of boundary and interconnection conditions of the network, see \cite{LeGZM05,Vill07,VilZLGM09,AugJ14,RamLGMZ14,Aug20,TroW22,WauZ22}, which can be described using the matrices $W_{B,1}$, $W_{B,2}$ and $W_C$ in \eqref{eq:A_JaZw_intro} whereas in our approach we use the uniform positivity of the operator $R$. A related result allowing $P_0$ to be dissipative in \eqref{eq:A_JaZw_intro} was given in \cite[Theorem 6.33]{Vill07}, but compared to this, we consider a more general class of operators.

In Section \ref{sec:interconnect}, we study structure preserving interconnections of boundary control systems \eqref{eq:gen_ph}. Here we consider networks consisting of finitely many nodes and interconnecting edges such that for every edge there is an operator satisfying (M1)-(M3) and we consider then direct sums of these operators. It is shown that Kirchhoff-like interconnection conditions for this direct sum operator fulfill the assumptions (M1)-(M3). Hence, the results of Section \ref{sec:BCS} yield the power-balance equation \eqref{eq:PBE_int} as well as exponential stability of the evolution equation on the network. 

That Kirchhoff-type interconnections preserves impedance passivity of classical solutions of boundary nodes was shown in \cite{AalM13}, and contractivity and passivity for hyperbolic systems in \cite{ZwaLGM16}, see also \cite{BanB21}. Besides the boundary triplet approach which is used in this paper for interconnection of pH systems, a more general approach for pH systems \eqref{eq:ph_bc_int} using boundary systems is given in \cite{SAW20}.

Finally, in Section \ref{sec:TL} we apply our results to show stability and passivity of electrical transmission line networks which are modeled as distributed pH systems based on the telegraph equations. The considered networks consist of coupled transmission lines as edges and prosumers at each node providing either voltage or current inputs. More generally, lumped circuits containing distributed lines lead to partial-differential algebraic equations which were analyzed in \cite{ReiT05}.

\section{Stability and passivity of the system class}
\label{sec:BCS}
First, we recall some basic notations. Let $X$ be a Hilbert space with scalar product $\langle\cdot,\cdot\rangle_X$. Then  $X\times X$ equipped with the scalar product $\langle\begin{smallbmatrix}x_1\\ x_2\end{smallbmatrix},\begin{smallbmatrix}y_1\\ y_2\end{smallbmatrix}\rangle:=\langle x_1,y_1\rangle_X+\langle x_2,y_2\rangle_X$ is a Hilbert space. When it is clear from the context we will neglect the subscript and also write $\langle\cdot,\cdot\rangle$ instead of $\langle\cdot,\cdot\rangle_X$.
The set of bounded operators mapping a Hilbert space $X$ into itself will be denoted by $\eL(X)$.

Then we consider the following boundary control system
\begin{align}
    \label{def:bctrl}
\tfrac{d}{dt}z(t)=\mathfrak{A} Hz(t),\quad u(t)=\Gamma_0H z(t),\quad y(t)=\Gamma_1Hz(t),\quad Hz(0)=Hz_0
\end{align}
in the Hilbert space $X\times X$, mappings  $\Gamma_0,\Gamma_1:\dom\mathfrak{A}\rightarrow X$ and assume that the following conditions hold:
\begin{itemize}
    \item[\rm (A1)] Let $\hat D$ be closed densely defined and skew-symmetric and $R,G\in \eL(X)$ with $\langle Rx,x\rangle\geq 0$ and $\langle Gx,x\rangle\geq 0$ for all $x\in X$
    then $\mathfrak{A}$ is a closed and densely defined operator on $X\times X$ given by
    \[\mathfrak{A}:=\begin{bmatrix}-G&\hat D^*\\ \hat D^* & -R\end{bmatrix};\] 
    \item[\rm (A2)] $H\in\eL(X\times X)$ and there exist $m,M>0$ such that  $m\|x\|^2\leq \langle H x,x\rangle\leq M\|x\|^2$ holds for all $x\in X\times X$;
    \item[\rm (A3)] 
$A:=\mathfrak{A}|_{\ker( \Gamma_0)}=\begin{bmatrix}-G&-D^*\\ D & -R\end{bmatrix}$ for some closed and densely defined operator $D$ in $X$. 
\end{itemize}

In the following definition, we recall typical notions of stability of semigroups, see e.g.\ \cite{EngN00}. 
\begin{definition}
Let $(T(t))_{t\geq 0}$ be a strongly continuous semigroup in a Hilbert space $X$. Then this semigroup is said to be
\begin{itemize}
\item[\rm (a)] \emph{unitary} if $\|T(t)x\|=\|x\|$ holds for all $x\in X$ and all $t\geq 0$;
    \item[\rm (b)] \emph{bounded} if there exists $M>0$ such that $\|T(t)\|\leq M$ holds for all $t\geq 0$ and \emph{contractive} if the latter holds with $M=1$;
    \item[\rm (c)] \emph{strongly stable} if  $\lim\limits_{t\rightarrow\infty}\|T(t)x_0\|=0$ holds for all $x_0\in X$;
    \item[\rm (d)] \emph{exponentially stable} if there exist $\alpha,M>0$ such that $\|T(t)\|\leq Me^{-\alpha t}$ holds for all $t\geq 0$.
\end{itemize}
\end{definition}
Strongly continuous semigroups satisfy the following relationship
\[
\text{exponentially stable}\quad \Rightarrow\quad\text{strongly stable} \quad \Rightarrow\quad\text{contractive}  \quad \Leftarrow\quad\text{unitary}.
\]

In the following, we use the \emph{resolvent set} $\rho(A)$ which is the set of all $\lambda\in\C$ for which $A-\lambda$ maps $\dom A$ bijectively into $X$. It follows from the closed graph theorem that $(A-\lambda)^{-1}$ is a bounded operator. Furthermore, the \emph{spectrum} is defined as $\sigma(A):=\C\setminus\rho(A)$.

Below we recall well-known characterizations of unitarity due to Stone, see \cite[Theorem II.3.24]{EngN00},
contractivity due to Lumer and  Philips, see \cite[Theorem II.3.15]{EngN00}, strong stability due to Arendt, Batty, Lyubich, and  V\~{u}, see \cite[Theorem V.2.21]{EngN00} and exponential stability from Gearhart, Prüss, and Huang, see \cite[Theorem V.1.11]{EngN00}.
\begin{proposition}
\label{prop:many_stable}
Let $(T(t))_{t\geq 0}$ be a strongly continuous semigroup  in a Hilbert space $X$ with generator $A$ then the following holds:
\begin{itemize}
\item[\rm (a)] The semigroup is unitary if and only if $A$ is skew-adjoint.
\item[\rm (b)] The semigroup is contractive if and only if 
\begin{align}
\label{eq:dis}
\langle Ax,x\rangle+\langle x,Ax\rangle\leq 0,\quad \text{for all $x\in \dom A$,}
\end{align}
i.e.\ $A$ is \emph{dissipative} and $A-\lambda$ is surjective for some $\lambda>0$.

\item[\rm (c)] The semigroup is strongly stable if it is bounded, $A^*$ has no eigenvalues on $\i\R$ and $\sigma(A)\cap \i\R$ is at most countable.

\item[\rm (d)] The semigroup is exponentially stable if and only if
\[
\sigma(A)\subseteq\{z\in\C~|~\re z< 0\}\quad \text{and}  \quad\sup_{\beta\in\R} \|(\i\beta-A)^{-1}\|<\infty.
\]
\end{itemize}
\end{proposition}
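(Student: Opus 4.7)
The plan is essentially to invoke four classical semigroup results, each of which is already cited to Engel--Nagel in the statement. Since Proposition \ref{prop:many_stable} is really a compilation, I would not reprove anything from scratch; instead I would outline, one by one, why each assertion follows from the cited theorem and indicate the key analytic tool involved.

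For part (a), Stone's theorem, the \emph{only if} direction starts by differentiating $\|T(t)x\|^2=\|x\|^2$ at $t=0$ for $x\in\dom A$, which gives $\langle Ax,x\rangle+\langle x,Ax\rangle=0$, hence $A\subseteq -A^*$. Since $(T(t))_{t\ge 0}$ is unitary, the inverses $T(t)^{-1}=T(t)^*$ give a second strongly continuous contraction semigroup with generator $-A$, so $A$ is maximally dissipative in both directions, which forces $A^*=-A$. The converse uses the spectral theorem for the self-adjoint operator $-\i A$ to construct the unitary group directly.

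For part (b), the Lumer--Phillips theorem, I would first observe that the dissipativity inequality \eqref{eq:dis} is equivalent to $\|(\lambda-A)x\|\ge \lambda\|x\|$ for all $\lambda>0$ and $x\in\dom A$. Combined with surjectivity of $\lambda_0-A$ for some $\lambda_0>0$, a standard connectedness argument on the resolvent set shows $(0,\infty)\subseteq\rho(A)$ with $\|(\lambda-A)^{-1}\|\le 1/\lambda$, so the Hille--Yosida generation theorem applies. Conversely, differentiating $\|T(t)x\|\le\|x\|$ at $0$ recovers \eqref{eq:dis}, and $(0,\infty)\subseteq\rho(A)$ gives the surjectivity.

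Parts (c) and (d) are the deeper assertions and would form the main obstacle if one attempted to write self-contained proofs. Part (c) is the Arendt--Batty--Lyubich--V\~u theorem, which rests on a Tauberian/Laplace transform argument: the countability of $\sigma(A)\cap\i\R$ together with the absence of imaginary eigenvalues of $A^*$ prevents any nontrivial almost periodic component of the orbits, forcing strong convergence to zero. Part (d), the Gearhart--Pr\"uss--Huang theorem, is the only place where the Hilbert space structure is essential: the key step is the Plancherel identity applied to $t\mapsto T(t)x$, which identifies $L^2$-integrability of orbits with a uniform bound on the resolvent along $\i\R$, and the Datko--Pazy theorem then upgrades $L^2$-integrability to exponential decay. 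In both cases I would simply reference \cite[Theorems V.2.21 and V.1.11]{EngN00} rather than reprove them, since no new ideas beyond those classical arguments are needed here.
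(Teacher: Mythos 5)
Your proposal is correct and matches the paper exactly: the paper gives no proof of Proposition \ref{prop:many_stable}, simply attributing the four parts to Stone, Lumer--Phillips, Arendt--Batty--Lyubich--V\~u, and Gearhart--Pr\"uss--Huang with the same references to \cite{EngN00} that you invoke. Your additional sketches of the underlying arguments are accurate but not needed beyond the citations.
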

\begin{remark}
\label{rem:stable}
\begin{itemize}
    \item[\rm (a)] Since $A$ given by (A3) is a bounded perturbation of a skew-adjoint operator, it generates a strongly continuous semigroup, see e.g.\ \cite[Corollary III.1.5]{EngN00}. 
    \item[\rm (b)] The assertions (c) and (d) of Proposition \ref{prop:many_stable} also hold in Banach spaces. If we replace \eqref{eq:dis} by 
\[
\|(\lambda-A)x\|\geq \lambda\|x\|\quad \text{for all $\lambda>0$ and all $x\in\dom A$},
\]
then Proposition \ref{prop:many_stable} (b) holds in Banach spaces as well. 
\item[\rm (c)] For Hilbert spaces the surjectivity condition in (b) can be replaced by the dissipativity of $A^*$, see \cite[Theorem 6.1.8]{JacZ12}. 
\end{itemize}
\end{remark}

A useful characterization of exponential stability of semigroups in Hilbert spaces is based on Lyapunov inequalities for their generators, see e.g.\ \cite[Theorem 8.1.3]{JacZ12}.
\begin{proposition}
\label{prop:lyap}
Let $(T(t))_{t\geq 0}$ be a  strongly continuous semigroup in a Hilbert space $X$. Then the semigroup is exponentially stable if and only if there exists $P\in \eL(X)$ with $\langle Px,x\rangle\geq 0$ for all $x\in X$ satisfying the following \emph{Lyapunov inequality} 
\begin{align}
\label{lyap_strict}
\langle Ax,Px\rangle+\langle Px,Ax\rangle \leq-\langle x,x\rangle,\quad \text{for all $x\in \dom A$.}
\end{align}
\end{proposition}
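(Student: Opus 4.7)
My plan is to prove the equivalence via the observability Gramian. For the forward implication, assume exponential stability, so by definition $\|T(t)\|\leq Me^{-\alpha t}$ for some $\alpha,M>0$. I would define
\[
P:=\int_0^\infty T(t)^*T(t)\,dt,
\]
which converges as a Bochner integral in $\eL(X)$ because $\|T(t)^*T(t)\|\leq M^2 e^{-2\alpha t}$ is integrable. This $P$ is self-adjoint and nonnegative since $\langle Px,x\rangle=\int_0^\infty \|T(t)x\|^2\,dt\geq 0$ for every $x\in X$.

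For $x\in\dom A$, the orbit $T(t)x$ remains in $\dom A$ with $\frac{d}{dt}T(t)x=AT(t)x$, so
\[
\tfrac{d}{dt}\|T(t)x\|^2=\langle AT(t)x,T(t)x\rangle+\langle T(t)x,AT(t)x\rangle=\langle Ax,T(t)^*T(t)x\rangle+\langle T(t)^*T(t)x,Ax\rangle.
\]
Integrating over $[0,\infty)$ and using $\lim_{t\to\infty}\|T(t)x\|=0$ (by exponential decay) gives $\langle Ax,Px\rangle+\langle Px,Ax\rangle=-\|x\|^2$, which is stronger than the stated inequality.

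For the converse, suppose $P\in\eL(X)$ with $P\geq 0$ satisfies the Lyapunov inequality. For $x\in\dom A$ and $s>0$ I would compute
\[
\tfrac{d}{dt}\langle PT(t)x,T(t)x\rangle=\langle PAT(t)x,T(t)x\rangle+\langle PT(t)x,AT(t)x\rangle\leq -\|T(t)x\|^2,
\]
and integrate from $0$ to $s$. Using $P\geq 0$ to drop the boundary term at $s$ yields
\[
\int_0^s\|T(t)x\|^2\,dt\leq \langle Px,x\rangle\leq \|P\|\,\|x\|^2.
\]
Letting $s\to\infty$ gives a uniform bound on $\dom A$; by density of $\dom A$ together with strong continuity (and Fatou's lemma to keep the bound under limits) the estimate extends to all $x\in X$. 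Datko's theorem for Hilbert spaces then yields exponential stability.

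The proof is essentially routine; the one creative step is writing down $P=\int_0^\infty T(t)^*T(t)\,dt$ for the forward direction. The only technical care I expect is in the backward direction, where one must justify the extension of the $L^2$-in-time bound from $\dom A$ to the whole space before invoking Datko, but this is standard.
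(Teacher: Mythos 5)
Your proof is correct, and it is essentially the standard argument behind this result, which the paper itself does not prove but only cites (Theorem 8.1.3 of Jacob--Zwart): the observability Gramian $P=\int_0^\infty T(t)^*T(t)\,dt$ for necessity, and integration of the Lyapunov inequality along orbits followed by Datko's theorem for sufficiency. Two cosmetic points: the integral defining $P$ should be read in the strong sense ($Px:=\int_0^\infty T(t)^*T(t)x\,dt$ as an $X$-valued Bochner integral), since $t\mapsto T(t)^*T(t)$ need not be norm-measurable as an $\eL(X)$-valued map; and in the converse direction you implicitly use that $\langle Px,x\rangle\geq 0$ on a complex Hilbert space forces $P=P^*$, which is what lets you rewrite $\langle PAT(t)x,T(t)x\rangle$ as $\langle AT(t)x,PT(t)x\rangle$ so that the Lyapunov inequality applies.
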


Using the positivity assumption (A2) on the Hamiltonian $H$ we immediately obtain the following stability characterization for our class of operators.
\begin{proposition}
\label{prop:lyap_RG}
Let $A$ be a closed densely defined operator in the Hilbert space $X$ given by (A3). Then the following holds:
\begin{itemize}
\item[\rm (a)] If $R=G=0$ then $A$ generates a unitary semigroup.
\item[\rm (b)] If $\langle Gx,x\rangle ,\langle Rx,x\rangle\geq 0$ holds for all $x\in X$  then $A$ generates a contraction semigroup.
\item[\rm (c)] If there exist some $r_0,g_0>0$ such that $\langle Rx,x\rangle\geq r_0$ and $\langle Gx,x\rangle\geq g_0$ holds for all $x\in X$ then $A$ generates an exponentially stable semigroup.
\end{itemize}
\end{proposition}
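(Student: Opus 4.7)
The plan is to reduce all three parts to a single algebraic computation of $\re\langle Az,z\rangle$ on $\dom A$, made possible by the block form in (A3).

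Since $G,R\in\eL(X)$ impose no restriction on the domain, $\dom A = \dom D \times \dom D^*$, and for $z=(z_1,z_2)\in\dom A$ a direct expansion gives
\[
\langle Az,z\rangle = -\langle Gz_1,z_1\rangle - \langle Rz_2,z_2\rangle + \langle Dz_1,z_2\rangle - \langle D^*z_2,z_1\rangle.
\]
Because $D^*$ is the adjoint of $D$, $\langle Dz_1,z_2\rangle = \langle z_1,D^*z_2\rangle$, and conjugate symmetry yields $\langle D^*z_2,z_1\rangle = \overline{\langle Dz_1,z_2\rangle}$; hence the off-diagonal terms collapse to $2\i\,\im\langle Dz_1,z_2\rangle$, which is purely imaginary. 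Since the assumptions on $G$ and $R$ force $\langle Gz_1,z_1\rangle,\langle Rz_2,z_2\rangle\in[0,\infty)$, one obtains the key identity
\begin{equation}\label{eq:reA}
\re\langle Az,z\rangle = -\langle Gz_1,z_1\rangle - \langle Rz_2,z_2\rangle.
\end{equation}

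For (a) I would set $R=G=0$, so that \eqref{eq:reA} makes $A$ skew-symmetric; to upgrade to skew-adjointness I would compute $A^*$ from its defining relation by matching the $z_1$- and $z_2$-contributions separately. This forces $w_2\in\dom D^*$ with $(A^*w)_1=D^*w_2$ and, using $D^{**}=D$ (which holds because $D$ is closed), $w_1\in\dom D$ with $(A^*w)_2=-Dw_1$; hence $A^*=-A$, and Proposition \ref{prop:many_stable}(a) gives a unitary semigroup. For (b), Remark \ref{rem:stable}(a) already ensures that $A$ generates a strongly continuous semigroup, as it is the bounded perturbation of the skew-adjoint operator from (a) by $\diag(-G,-R)$. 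Dissipativity of $A$ is immediate from \eqref{eq:reA}. Repeating the computation for $A^*$ -- and noting $\langle G^*x,x\rangle = \overline{\langle Gx,x\rangle} = \langle Gx,x\rangle \geq 0$ since the latter is real, and similarly for $R$ -- yields dissipativity of $A^*$, so Remark \ref{rem:stable}(c) concludes the contraction property.

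For (c) I would invoke Proposition \ref{prop:lyap} with $P := c I$, $c := 1/(2\min(r_0,g_0)) > 0$. Combining the uniform positivity hypotheses with \eqref{eq:reA} gives
\[
\langle Az,Pz\rangle + \langle Pz,Az\rangle = 2c\,\re\langle Az,z\rangle \leq -2c\min(r_0,g_0)\|z\|^2 = -\|z\|^2,
\]
which is precisely the Lyapunov inequality \eqref{lyap_strict}, so $A$ generates an exponentially stable semigroup. The only non-routine step is the cancellation of the off-diagonal inner products in \eqref{eq:reA}; once that identity is in hand, all three assertions follow from the semigroup results already collected in this section.
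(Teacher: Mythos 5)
Your proof is correct and follows exactly the route the paper intends: the paper states this proposition without proof (``we immediately obtain\dots''), and your dissipation identity $\re\langle Az,z\rangle=-\langle Gz_1,z_1\rangle-\langle Rz_2,z_2\rangle$ combined with Stone's theorem for (a), Lumer--Phillips via Remark~\ref{rem:stable}(c) for (b), and the Lyapunov criterion of Proposition~\ref{prop:lyap} with $P=cI$ for (c) is precisely the argument being suppressed. No gaps.
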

Note that operators $G$ and $R$ fulfilling the condition in Proposition \ref{prop:lyap_RG} (c) are also called \emph{uniformly positive}. In the following, we investigate the exponential stability if $G$ is not uniformly positive and using the additional assumption 
\begin{itemize}
\item[(A4)] There exists $r_0>0$ such that $\langle R x,x\rangle \geq r_0\|x\|^2$ holds for all $x\in X$.
\end{itemize}
This assumption appears for instance in the context of beam equations and gas transport networks \cite{JacZ12,EggK18}. Thanks to the Lyapunov inequality \eqref{lyap_strict}, this also implies the exponential stability of the semigroup generated by $A$ with $G\geq 0$. Furthermore, similar stability results can be derived for the case where $R\geq 0$ holds and $G$ is uniformly positive. 

As a preliminary result, we show that the above assumption allows us to exclude eigenvalues on the imaginary axis.
\begin{lemma}
\label{lem:spec}
Let $D$ be a closed densely defined operator in a Hilbert space $X$ and consider $A=\begin{smallbmatrix}-G&-D^*\\D&-R\end{smallbmatrix}$ with $R,G\in \eL(X)$ satisfying (A2) and (A4) for some $r_0>0$. Then all eigenvalues $\lambda=r+\i \omega\in\sigma(A)$ with $\omega\neq 0$ satisfy
\begin{align}
\label{eq:eval_estim}
\re \lambda\leq \frac{r_0}{2}.
\end{align}
Furthermore, it holds that
\begin{align}
\label{ker_A}
\ker A=\ker A^*=\ker \begin{bmatrix}
G \\ D
\end{bmatrix}\times\{0\}.
\end{align}
\end{lemma}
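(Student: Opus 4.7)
The plan is to exploit the block structure of $A$ to convert both claims into a single energy identity obtained by pairing the relevant equation with the eigenvector (resp.\ kernel element) in $X\times X$, and then to extract information by separating real and imaginary parts.

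For the eigenvalue bound \eqref{eq:eval_estim}, I would take $(x,y)\in\dom A\setminus\{0\}$ with $A(x,y)=\lambda(x,y)$, $\lambda=r+\i\omega$, and compute
\[
\lambda(\|x\|^2+\|y\|^2)=\langle -Gx-D^*y,x\rangle+\langle Dx-Ry,y\rangle=-\langle Gx,x\rangle-\langle Ry,y\rangle-\langle D^*y,x\rangle+\langle Dx,y\rangle.
\]
The cross terms satisfy $-\langle D^*y,x\rangle+\langle Dx,y\rangle=\langle Dx,y\rangle-\overline{\langle Dx,y\rangle}=2\i\,\im\langle Dx,y\rangle$, so taking real parts and using (A4) yields
\[
r(\|x\|^2+\|y\|^2)=-\langle Gx,x\rangle-\langle Ry,y\rangle\leq -r_0\|y\|^2\leq 0\leq \tfrac{r_0}{2},
\]
which is the stated estimate. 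The equality case $r=0$ — corresponding to the text's motivation of excluding imaginary eigenvalues — forces $\|y\|=0$ by (A4) and also $\langle Gx,x\rangle=0$; substituting $y=0$ collapses the second eigenvalue equation to $Dx=0$ and the first to $-Gx=\i\omega x$. Pairing the latter with $x$ gives the real number $-\langle Gx,x\rangle$ on the left and $\i\omega\|x\|^2$ on the right, so $\omega\neq 0$ forces $x=0$, contradicting $(x,y)\neq 0$.

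For the kernel identity \eqref{ker_A}, apply the same pairing with $\lambda=0$: if $A(x,y)=0$ then $\langle Gx,x\rangle+\langle Ry,y\rangle=0$, so (A4) gives $y=0$, while $\langle Gx,x\rangle=0$ together with $G\geq 0$ bounded (so $G^{1/2}$ exists) forces $G^{1/2}x=0$ and hence $Gx=0$; the lower block equation then reduces to $Dx=0$. The reverse inclusion is by direct substitution. For $A^*$, I would first read off the block adjoint
\[
A^*=\begin{bmatrix}-G & D^*\\ -D & -R\end{bmatrix},
\]
using closedness of $D$ so that $D^{**}=D$, together with the self-adjointness of $G$ and $R$ (which follows from boundedness and nonnegativity on a complex Hilbert space). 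Now the cross terms read $\langle D^*y,x\rangle-\langle Dx,y\rangle=-2\i\,\im\langle Dx,y\rangle$, which is still purely imaginary, so the real-part computation is identical and yields the same kernel description.

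The main technical wrinkle I expect is justifying the entrywise formula for $A^*$ on the correct domain, since $D$ is unbounded; however, the diagonal entries are bounded and the off-diagonal entries form a closed adjoint pair, so the block adjoint follows entrywise. Everything else reduces to routine bookkeeping of real and imaginary parts.
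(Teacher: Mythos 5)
Your argument is correct for the statement as printed, and for the kernel identity \eqref{ker_A} it is actually a cleaner route than the paper's: the paper proves $\ker A\subseteq\ker\begin{smallbmatrix}G\\D\end{smallbmatrix}\times\{0\}$ by inverting $R$ (which (A4) permits), eliminating $x_2=R^{-1}Dx_1$ and analysing $-G-D^*R^{-1}D$, whereas your single real-part identity $\langle Gx,x\rangle+\langle Ry,y\rangle=0$ gets there directly; both treatments of $A^*$ rest on the same entrywise-adjoint computation, which you justify adequately (bounded diagonal plus a skew-adjoint off-diagonal pair).

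For the eigenvalue estimate, however, you should be aware that you are proving a weaker fact than the paper does, and the difference matters. The paper's own inequality chain ends with $-r_0\|x_2\|^2\geq 2r\|x_2\|^2$, i.e.\ $\re\lambda\leq -\tfrac{r_0}{2}$; the displayed bound $\re\lambda\leq\tfrac{r_0}{2}$ is evidently a sign typo (it is the negative bound that ``excludes eigenvalues on the imaginary axis,'' as announced just before the lemma). Your computation pairs the full eigenvalue equation with the full eigenvector and takes real parts, which yields only $r(\|x\|^2+\|y\|^2)\leq -r_0\|y\|^2\leq 0$; from this identity alone one cannot reach $-\tfrac{r_0}{2}$, because nothing controls $\|y\|$ from below in terms of $\|x\|$. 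The paper's key extra step is to pair the two block equations \emph{separately} with $x_2$ and $x_1$, combine them into $(r-\i\omega)\|x_1\|^2+\langle Gx_1,x_1\rangle=-(r+\i\omega)\|x_2\|^2-\langle Rx_2,x_2\rangle$, and read off $\|x_1\|=\|x_2\|$ from the imaginary parts (using $\omega\neq 0$); only then does (A4) give $2r\|x_2\|^2\leq -r_0\|x_2\|^2$. Your supplementary case analysis for $r=0$ does recover the qualitative conclusion that there are no nonzero purely imaginary eigenvalues, so nothing downstream breaks, but if the lemma is to be read with the intended constant $-\tfrac{r_0}{2}$, your proof has a genuine gap that is repaired exactly by the componentwise pairing above.
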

\begin{proof}
Let $\lambda=r+\i\omega$ for some $r,\omega\in\R$ with $\omega\neq 0$ be an eigenvalue of $A$, then there exists $x=(x_1,x_2)\in X\times X$ satisfying $Ax=(r+\i\omega)x$.
This equation can be rewritten as
\begin{align}
\label{eq:A_eigenvalue}
Dx_1-Rx_2=(r+\i\omega)x_2,\quad -Gx_1-D^* x_2=(r+\i\omega)x_1.
\end{align}
Multiplying the first and the second equation with $x_2$ and $x_1$, respectively, implies
\begin{align}
\label{eq:first_sec}
\langle Dx_1,x_2\rangle-\langle Rx_2,x_2\rangle=(r+\i\omega)\|x_2\|^2,\quad -\langle Gx_1,x_1\rangle-\langle D^* x_2,x_1\rangle=(r+\i\omega)\|x_1\|^2.
\end{align}
The second equation implies 
\[
(r-\i\omega)\|x_1\|^2=-\langle Gx_1,x_1\rangle+\overline{\langle -D^* x_2,x_1\rangle}=-\langle Gx_1,x_1\rangle-\overline{\langle x_2,Dx_1\rangle}=-\langle Gx_1,x_1\rangle-\langle Dx_1,x_2\rangle
\]
and combining this with the first equation in \eqref{eq:first_sec} yields
\begin{align}
    \label{eq:real_im}
(r-\i\omega)\|x_1\|^2+\langle Gx_1,x_1\rangle=-(r+\i\omega)\|x_2\|^2-\langle Rx_2,x_2\rangle.
%-(r-\i\omega)\|x_1\|^2-\langle Rx_2,x_2\rangle =(r+\i\omega)\|x_2\|^2.
\end{align}
Since $\omega\neq 0$, a comparison of the imaginary parts in \eqref{eq:real_im} leads to
\[
\|x_1\|=\|x_2\|.
\]
Hence $x_2\neq 0$ because otherwise $x_1=0$ and therefore $x=(x_1,x_2)=0$ would not be an eigenvector. Therefore, (A4) and \eqref{eq:real_im} imply
\[
-r_0\|x_2\|^2\geq -\langle Rx_2,x_2\rangle\geq -\langle Rx_2,x_2\rangle-\langle Gx_1,x_1\rangle =2r\|x_2\|^2
\]
which leads to \eqref{eq:eval_estim}. 

We continue with the proof of \eqref{ker_A}.  Clearly, $\supseteq$ holds. Conversely, let $(x_1,x_2)\in\ker A$  and assume that $x_2\neq 0$ (otherwise it would be in the desired set). Then, rewriting the left hand side equation in \eqref{eq:A_eigenvalue} for $r=\omega=0$ implies 
\[
x_2=R^{-1}Dx_1\quad \Rightarrow\quad  (-G-D^*R^{-1}D)x_1=0\quad \Rightarrow\quad  x_1^*(-G-D^*R^{-1}D)x_1=0 .
\]
The nonnegativity of $G$ and $D^*R^{-1}D$ yields $x_1\in\ker D^*R^{-1}D$ which implies  $x_1\in\ker R^{-\tfrac{1}{2}}D=\ker D$, but then $x_2=0$ holds which proves the claim. Applying \eqref{ker_A} to $A^*$ we obtain
\[
\ker A^*=\ker \begin{bmatrix}
G \\ -D
\end{bmatrix}\times\{0\}=\ker \begin{bmatrix}
G \\ D
\end{bmatrix}\times\{0\}=\ker A.
\]
\end{proof}

The following lemma gives a criterion for the invertibility of block matrices having unbounded off diagonal entries. In the case of bounded entries this result was obtained in \cite[Section A.4]{Sta05}. 
\begin{lemma}
\label{lem:2x2_inv}
Let $D$ be closed and densely defined in a Hilbert space $X$. We consider for $\alpha,\beta\in\C$ the block operator matrix of the form 
\[
A_{\alpha,\beta}=\begin{bmatrix}
\alpha I_X & -D^*\\ D & \beta I_X
\end{bmatrix}.
\]
If $\alpha\beta\notin(-\infty,0]$ then $A_{\alpha,\beta}$ is boundedly invertible and its inverse is given by 
\begin{align}
\label{eq:inv_A}
A_{\alpha,\beta}^{-1}=\begin{bmatrix}
\beta I_X & D^*\\ -D & \alpha I_X
\end{bmatrix}\begin{bmatrix}
(\alpha\beta+D^*D)^{-1}&0\\ 0&(\alpha\beta+DD^*)^{-1}
\end{bmatrix}.
\end{align}
\end{lemma}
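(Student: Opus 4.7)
The plan is to verify the claimed formula directly, which reduces the problem to showing that the two scalar-shifted operators $\alpha\beta + D^*D$ and $\alpha\beta + DD^*$ are boundedly invertible and then performing a careful block-matrix multiplication. Concretely, the first step is to invoke the classical von Neumann result that for a closed densely defined operator $D$, the products $D^*D$ and $DD^*$ are self-adjoint and nonnegative, with $\dom(D^*D)=\{x\in\dom D:Dx\in\dom D^*\}\subseteq\dom D$ and analogously for $DD^*$. Consequently the spectra of $D^*D$ and $DD^*$ lie in $[0,\infty)$, and the hypothesis $\alpha\beta\notin(-\infty,0]$ ensures $-\alpha\beta$ is not in these spectra, so $(\alpha\beta+D^*D)^{-1}$ and $(\alpha\beta+DD^*)^{-1}$ exist and are bounded. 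Along the way I would record that this hypothesis also forces $\alpha\neq 0$ and $\beta\neq 0$.

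Next I would define $B$ as the right-hand side of \eqref{eq:inv_A} and check that $\ran B\subseteq\dom \mathfrak{A}_{\alpha,\beta}=\dom D\times\dom D^*$, where I write $\mathfrak{A}_{\alpha,\beta}$ for $A_{\alpha,\beta}$ to avoid confusion with the operator $A$ from (A3). For $y=(y_1,y_2)$ the first component of $By$ is $\beta(\alpha\beta+D^*D)^{-1}y_1+D^*(\alpha\beta+DD^*)^{-1}y_2$; the first summand lies in $\dom(D^*D)\subseteq\dom D$, while $(\alpha\beta+DD^*)^{-1}y_2\in\dom(DD^*)$ so $D^*(\alpha\beta+DD^*)^{-1}y_2\in\dom D$ by the defining property of $\dom(DD^*)$. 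The second component is handled symmetrically, giving membership in $\dom D^*$. Boundedness of $B$ follows because $(\alpha\beta+D^*D)^{-1}$ and $(\alpha\beta+DD^*)^{-1}$ are bounded and the operators $D(\alpha\beta+D^*D)^{-1}$ and $D^*(\alpha\beta+DD^*)^{-1}$ are bounded; the latter is a standard consequence of the functional calculus for the self-adjoint operators $D^*D$, $DD^*$ via polar decomposition of $D$.

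With the domain issues settled, the final step is the algebraic verification. I compute $A_{\alpha,\beta}B$ and $BA_{\alpha,\beta}$ entry-by-entry. For instance, the first component of $A_{\alpha,\beta}By$ equals
\[
\alpha\bigl[\beta(\alpha\beta+D^*D)^{-1}y_1+D^*(\alpha\beta+DD^*)^{-1}y_2\bigr]-D^*\bigl[-D(\alpha\beta+D^*D)^{-1}y_1+\alpha(\alpha\beta+DD^*)^{-1}y_2\bigr],
\]
and the $\alpha D^*(\alpha\beta+DD^*)^{-1}y_2$ terms cancel while the remaining terms collapse to $(\alpha\beta+D^*D)(\alpha\beta+D^*D)^{-1}y_1=y_1$. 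The second component is analogous, and the verification of $BA_{\alpha,\beta}=I$ on $\dom A_{\alpha,\beta}$ uses the intertwining identity $D(\alpha\beta+D^*D)^{-1}x=(\alpha\beta+DD^*)^{-1}Dx$ on $\dom D$ (and its adjoint counterpart), which is justified by multiplying through by the invertible shifts.

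The only genuine obstacle is the domain bookkeeping: one must confirm that every application of $D$ or $D^*$ in the formula for $B$ lands inside the appropriate domain before the next operator acts on it, and that the algebraic cancellations are legal on that domain. Once the inclusions $\dom(D^*D)\subseteq\dom D$ and $\dom(DD^*)\subseteq\dom D^*$ are in place, everything reduces to bounded arithmetic and the routine block identity.
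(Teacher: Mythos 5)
Your proposal is correct and follows essentially the same route as the paper: both invoke the von Neumann/Kato theorem to get self-adjointness and nonnegativity of $D^*D$ and $DD^*$, deduce bounded invertibility of $\alpha\beta+D^*D$ and $\alpha\beta+DD^*$ from $\alpha\beta\notin(-\infty,0]$, and then verify that the displayed block formula is the inverse. The only cosmetic difference is that the paper proves injectivity separately (by eliminating $x_1$) and merely asserts that the formula is a right inverse, whereas you check $A_{\alpha,\beta}B=I$ and $BA_{\alpha,\beta}=I$ directly via the intertwining identity $D(\alpha\beta+D^*D)^{-1}=(\alpha\beta+DD^*)^{-1}D$; your version is, if anything, more explicit about the domain bookkeeping that the paper leaves implicit.
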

\begin{proof}
By \cite[Theorem V.3.24]{Kat95}, $D^*D$ and $DD^*$ are self-adjoint and nonnegative and  we have $\sigma(D^*D),\sigma(DD^*)\subseteq[0,\infty)$. Hence if $\alpha\beta\notin(-\infty,0]$ we have, by definition of the spectrum, invertibility of $\alpha\beta+D^*D$ and $\alpha\beta+DD^*$. We first conclude the injectivity of $A_{\alpha,\beta}$. Assume that there exists $x=(x_1,x_2)$ with $A_{\alpha,\beta}x=0$. Then 
\[
\alpha x_1-D^*x_2=0,\quad Dx_1+\beta x_2=0.
\]
Since $\alpha\beta\neq 0$ we have $\alpha\neq 0$ and hence combining the two equations gives
\[
\alpha^{-1}DD^*x_2+\beta x_2=0.
\]
If $\alpha\beta\notin(-\infty,0]$ then $x_2=0$ since $\alpha\beta+DD^*$ is invertible and therefore $x_1=0$. Hence $A_{\alpha,\beta}$ is injective.

On the other hand a right inverse of $A_{\alpha,\beta}$ is given by 
\[
A_r^{-1}:X\times X\rightarrow \dom D\times \dom D^*,\quad A_{r}^{-1}:=\begin{bmatrix}
\beta I_X & D^*\\ -D & \alpha I_X
\end{bmatrix}\begin{bmatrix}
(\alpha\beta+D^*D)^{-1}&0\\ 0&(\alpha\beta+DD^*)^{-1}
\end{bmatrix}.
\]
Hence, $A_{\alpha,\beta}$ is surjective and, since it is also injective,  $A_{\alpha,\beta}^{-1}$ exists and is bounded. Moreover, using the equation $A_{\alpha,\beta}A_{r}^{-1}=I=A_{\alpha,\beta}A_{\alpha,\beta}^{-1}$ and multiplying it with $A_{\alpha,\beta}^{-1}$ from the left we see that $A_r^{-1}=A_{\alpha,\beta}^{-1}$ holds which proves \eqref{eq:inv_A}. 
\end{proof}

\begin{lemma}
\label{lem:stabwithH}
Let $D:X\supseteq \dom D\rightarrow X$ be closed and densely defined, let $R,G\in\eL(X)$ nonnegative and $H\in \eL(X\times X)$ uniformly positive and consider $A=\begin{bmatrix}-G&-D^*\\D&-R\end{bmatrix}$. Then $AH$ generates a strongly continuous semigroup. Furthermore, $A$ generates a contraction (exponentially stable) semigroup if and only if $AH$ generates a contraction (exponentially stable) semigroup. 
\end{lemma}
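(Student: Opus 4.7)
The plan is to work in the energy Hilbert space $X_H := (X\times X,\langle\cdot,\cdot\rangle_H)$ with inner product $\langle x,y\rangle_H := \langle Hx,y\rangle$. Self-adjointness of $H$ follows from $\langle Hz,z\rangle\in\R$ for all $z$ (forcing $H=H^*$ on a complex Hilbert space), and combined with uniform positivity this makes the norms on $X_H$ and on $X\times X$ equivalent. The key idea is that structural properties of $A$ on $X\times X$ translate into corresponding properties of $AH$ on $X_H$; since exponential stability is invariant under equivalent norms, the conclusions transfer back to $X\times X$.

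For the $C_0$-semigroup generation I would decompose $A = B - \diag(G,R)$ with $B := \begin{smallbmatrix}0 & -D^*\\ D & 0\end{smallbmatrix}$. Because $D$ is closed and densely defined we have $D^{**}=D$, so $B$ is skew-adjoint on $X\times X$. A direct calculation using $H=H^*$ then shows that $BH$ is skew-adjoint on $X_H$ (with domain $H^{-1}\dom B$), whence Stone's theorem (Proposition~\ref{prop:many_stable}(a)) produces a unitary group on $X_H$. The bounded perturbation $-\diag(G,R)H$ together with Remark~\ref{rem:stable}(a) then yields a $C_0$-semigroup for $AH$ on $X_H$, and hence on $X\times X$ by norm equivalence.

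For the contractivity equivalence the central identity is, for $z\in\dom(AH)$ and $u := Hz\in\dom A$,
\[
2\re\langle AHz,z\rangle_H = 2\re\langle HAHz,z\rangle = 2\re\langle AHz,Hz\rangle = 2\re\langle Au,u\rangle,
\]
so $H$-dissipativity of $AH$ on $X_H$ is equivalent to dissipativity of $A$ on $X\times X$, the bijection $z\mapsto Hz$ identifying $\dom(AH)$ with $\dom A$. An analogous computation gives $(AH)^{*_H}=A^*H$ on $X_H$, and $A^*H$ is $H$-dissipative iff $A^*$ is dissipative on $X\times X$. The Hilbert-space Lumer--Phillips theorem (Remark~\ref{rem:stable}(c)) applied in both spaces then yields the equivalence of contractivity.

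For exponential stability I would use the Lyapunov characterization (Proposition~\ref{prop:lyap}). The $X_H$-self-adjoint positive operators $\tilde P$ on $X_H$ correspond bijectively to $X$-self-adjoint positive operators $Q := H\tilde P$ on $X\times X$. Under this correspondence together with $u=Hz$, the Lyapunov inequality for $AH$ on $X_H$ transforms into a weighted inequality which, via the substitution $v=H^{-1/2}u$, becomes the Lyapunov inequality for $\hat A := H^{1/2}AH^{1/2}$ on $X\times X$ with $\tilde Q := H^{-1/2}QH^{-1/2}$. Combined with the unitary equivalence between $AH$ on $X_H$ and $\hat A$ on $X\times X$ via $U = H^{1/2}$, this reduces the claim to the equivalence of exponential stability between $A$ and $\hat A$ on $X\times X$. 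I expect this last step to be the main obstacle: $H^{1/2}$ and a Lyapunov operator $P$ for $A$ need not commute, so naive candidates such as $H^{\pm 1/2}PH^{\pm 1/2}$ do not directly produce a Lyapunov operator for $\hat A$. One must exploit the specific structure of $A$ from (A3), where the off-diagonal skew-symmetry and diagonal dissipativity are preserved under the congruence $X\mapsto H^{1/2}XH^{1/2}$, to transfer exp-stability between $A$ and $\hat A$.
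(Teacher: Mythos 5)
Your arguments for semigroup generation and for the contractivity equivalence are correct and coincide with the paper's: the decomposition of $AH$ into a part that is skew-adjoint in the energy product $\langle H\cdot,\cdot\rangle$ plus a bounded perturbation, and the identities $2\re\langle AHz,z\rangle_H=2\re\langle Au,u\rangle$ and $(AH)^{*_H}=A^*H$ with $u=Hz$, combined with the Hilbert-space Lumer--Phillips theorem via Remark~\ref{rem:stable}~(c).

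The genuine gap is the exponential stability equivalence, which you do not prove: you reduce it to comparing $A$ with $\hat A=H^{1/2}AH^{1/2}$, observe correctly that $H^{\pm1/2}PH^{\pm1/2}$ fails as a Lyapunov operator when $P$ and $H$ do not commute, and then stop, conjecturing that the block structure of $A$ must be invoked. The paper closes this step without any use of the block structure. Given a nonnegative $P$ with $\langle Az,Pz\rangle+\langle Pz,Az\rangle\leq-\langle z,z\rangle$, it works in the equivalent inner product $\langle v,w\rangle_{H-}:=\langle H^{-1}v,w\rangle$ and takes $\hat P:=\|H^{-1}\|^{3}HPH$. For $w=H^{-1}z$ the weight $H^{-1}$ cancels the left-hand factor $H$ of $\hat P$, while the right-hand factor $H$ implements the substitution $z=Hw$, so that
\[
\langle AHw,\hat Pw\rangle_{H-}+\langle\hat Pw,AHw\rangle_{H-}=\|H^{-1}\|^{3}\bigl(\langle Az,Pz\rangle+\langle Pz,Az\rangle\bigr)\leq-\|H^{-1}\|^{3}\langle z,z\rangle\leq-\langle w,w\rangle_{H-},
\]
and Proposition~\ref{prop:lyap}, applied in the space $(X\times X,\langle\cdot,\cdot\rangle_{H-})$ together with norm equivalence, yields exponential stability of $AH$; the converse follows by exchanging the roles of $H$ and $H^{-1}$. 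The idea your proposal is missing is therefore the pairing of the two-sided congruence $P\mapsto HPH$ with a one-sided weight $H^{-1}$ in the inner product, rather than a square-root conjugation in the unweighted space. (Your instinct about non-commutativity is not baseless: even in the paper's argument one must still verify that $HPH$ is nonnegative and self-adjoint with respect to $\langle\cdot,\cdot\rangle_{H-}$, which needs an extra step when $PH\neq HP$; but the $HPH$ construction is the key device, and your proposal does not reach it.)
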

\begin{proof}
Decompose
\[
AH=\begin{bmatrix}0&-D^*\\D&0\end{bmatrix}H-\begin{bmatrix}G&0\\0&R
\end{bmatrix}H,
\]
where the first summand is skew-adjoint in the weighted space $\langle H\cdot,\cdot\rangle$ and therefore generates a strongly continuous semigroup. Furthermore, $AH$ is a bounded perturbation of this generator which implies that $AH$ generates a semigroup as well, see Remark~\ref{rem:stable} (a).

The left-hand side of the Lyapunov inequality \eqref{lyap_strict} can be rewritten as 
\[
\langle Az,P z\rangle+\langle Pz,A z\rangle=\langle AHH^{-1}z,H^{-1}HPHH^{-1} z\rangle+\langle H^{-1}HPHH^{-1}z,AHH^{-1} z\rangle.
\]
Hence we consider the weighted product $\langle v,w\rangle_{H-}:=\langle H^{-1}v,w \rangle$ for all $v,w\in X\times X$. Moreover, we have 
\[
-\langle z,z \rangle=-\langle H H^{-1}z,HH^{-1}z \rangle\leq -\|H^{-1}\|^{-2}\langle H^{-1}z,H^{-1}z\rangle\leq -\|H^{-1}\|^{-3}\langle H^{-1}z,H^{-1}H^{-1}z\rangle.
\]
This implies for $\hat P:= \|H^{-1}\|^3HPH$ and $w=H^{-1}z$ for $z\in\dom A$ that
\[
\langle AHw,\hat Pw \rangle_{H-}+\langle \hat Pw,AHw \rangle_{H-}\leq -\langle w,w\rangle_{H-}.
\]
This proves that $AH$ generates an exponentially stable semigroup. The converse can be proven by repeating the above steps for $AH$ and with $H^{-1}$ instead of $H$. Finally, that $A$ generates a contraction semigroup if and only if $AH$ has this property follows from Proposition \ref{prop:many_stable} (b) and Remark \ref{rem:stable} (c). 
\end{proof}
Note that there are examples of semigroup generator $A$ and bounded operators $H$ such that $AH$ does not generate a semigroup \cite[Section 6]{ZwaLGMV09}.

The following proposition is one of the main results of this section.
\begin{proposition}
\label{prop:exp_stable}
Let $D$ be a closed densely defined operator in a Hilbert space $X$, let $R,G\in \eL(X)$ be as in (A1) and assume that (A4) holds. Further, let $H\in\eL(X\times X)$ be uniformly positive then for  $A=\begin{smallbmatrix}
-G&-D^*\\D&-R
\end{smallbmatrix}$ the following holds:
\begin{itemize}
\item[\rm (a)] If $D^*$ is onto then $AH$ generates an exponentially stable semigroup. 
\item[\rm (b)] If $\begin{smallbmatrix}
G\\D
\end{smallbmatrix}$ is injective then $A$ generates a strongly stable semigroup. 
\end{itemize}
\end{proposition}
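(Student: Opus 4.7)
My plan is to invoke the spectral characterizations in Proposition~\ref{prop:many_stable}: part~(d) (Gearhart--Pr\"uss--Huang) for (a) and part~(c) (Arendt--Batty--Lyubich--V\~u) for (b). By Lemma~\ref{lem:stabwithH}, proving (a) reduces to showing that $A$ itself generates an exponentially stable semigroup, so I work with $A$ throughout. The key common ingredient is the energy identity
\[
\re \langle (\i\beta - A)x,x\rangle = \langle Gx_1,x_1\rangle + \langle Rx_2,x_2\rangle \geq r_0\|x_2\|^2,
\]
together with Lemma~\ref{lem:spec}, which rules out eigenvalues of $A$ (and by the same proof, of $A^*$) on $\i\R\setminus\{0\}$.

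For part (a), I would simultaneously establish $\i\R\subseteq\rho(A)$ and $\sup_{\beta\in\R}\|(\i\beta-A)^{-1}\|<\infty$ by proving a uniform lower bound $\|(\i\beta-A)x\|\geq\delta\|x\|$ on $\dom A$. Assume for contradiction that there exist $\beta_n\in\R$ and $x_n=(x_{n,1},x_{n,2})\in\dom A$ with $\|x_n\|=1$ and $y_n:=(\i\beta_n-A)x_n\to 0$. The energy identity forces $\|x_{n,2}\|\to 0$ and $\|x_{n,1}\|\to 1$. If $\beta_n$ has a bounded subsequence, the second row yields $Dx_{n,1}=(\i\beta_n+R)x_{n,2}-y_{n,2}\to 0$; since $D^*$ onto implies $D$ is bounded below, $\|x_{n,1}\|\to 0$, a contradiction. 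For $|\beta_n|\to\infty$ I would use the identity
\[
\beta\bigl(\|x_1\|^2-\|x_2\|^2\bigr) = \im\langle y_1,x_1\rangle - \im\langle y_2,x_2\rangle,
\]
obtained by subtracting the $\langle\cdot,x_2\rangle$-pairing of the second row from the $\langle\cdot,x_1\rangle$-pairing of the first and taking imaginary parts; this forces $\|x_{n,1}\|^2-\|x_{n,2}\|^2\to 0$, which with $\|x_n\|=1$ contradicts $\|x_{n,2}\|\to 0$. Density of $\ran(\i\beta-A)$ follows from $\ker(-\i\beta-A^*)=\{0\}$, using Lemma~\ref{lem:spec} applied to $A^*$ for $\beta\neq 0$ and the kernel formula together with injectivity of $D$ for $\beta=0$. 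Hence $\i\beta-A$ is invertible with $\|(\i\beta-A)^{-1}\|\leq 1/\delta$ uniformly, and Proposition~\ref{prop:many_stable}(d) yields exponential stability of $A$, thus of $AH$ via Lemma~\ref{lem:stabwithH}.

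For part (b), I apply Proposition~\ref{prop:many_stable}(c). Boundedness of the semigroup is Proposition~\ref{prop:lyap_RG}(b). Lemma~\ref{lem:spec} applied to $A^*$ excludes eigenvalues in $\i\R\setminus\{0\}$, and the hypothesis that $\begin{smallbmatrix}G\\D\end{smallbmatrix}$ is injective excludes $\lambda=0$ via the kernel formula. It remains to show $\sigma(A)\cap\i\R$ is at most countable, and here the imaginary-part identity pays off without any further assumption on $D$: for each fixed $\beta\neq 0$, the same contradiction argument (now with $\|y_n\|/|\beta|\to 0$) gives $\i\beta-A$ bounded below and, combined with density of the range from the $A^*$ analysis, yields $\i\beta\in\rho(A)$. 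Thus $\sigma(A)\cap\i\R\subseteq\{0\}$ is countable and Proposition~\ref{prop:many_stable}(c) delivers strong stability. The main obstacle throughout is the $|\beta_n|\to\infty$ regime in part~(a): the energy estimate alone leaves $\|Dx_{n,1}\|$ uncontrolled, since the bound $\|Dx_{n,1}\|\leq(|\beta_n|+\|R\|)\|x_{n,2}\|+\|y_n\|$ need not vanish. The imaginary-part identity provides precisely the right cancellation there, and as a bonus supplies the entire argument for part~(b).
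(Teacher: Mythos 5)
Your argument is correct, but it reaches the conclusion by a genuinely different route from the paper, at least for part (a). The paper first specializes to $G=0$, $R=r_0 I_X$, writes the resolvent $(\i\beta-A)^{-1}$ explicitly via the block inversion formula of Lemma~\ref{lem:2x2_inv}, bounds each of the four block entries using the self-adjoint resolvent estimate for $D^*D$ and $DD^*$, and then transfers exponential stability to general $G\geq 0$, $R\geq r_0 I_X$ through the Lyapunov inequality of Proposition~\ref{prop:lyap}. You instead prove a uniform a priori lower bound $\|(\i\beta-A)x\|\geq\delta\|x\|$ directly for the general operator, by combining the dissipation identity $\re\langle(\i\beta-A)x,x\rangle=\langle Gx_1,x_1\rangle+\langle Rx_2,x_2\rangle$ with the imaginary-part identity $\beta(\|x_1\|^2-\|x_2\|^2)=\im\langle y_1,x_1\rangle-\im\langle y_2,x_2\rangle$ (which is valid since $\langle Gx,x\rangle,\langle Rx,x\rangle\in\R$ and $\im\langle D^*x_2,x_1\rangle=-\im\langle Dx_1,x_2\rangle$). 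This is shorter, avoids the Lyapunov transfer step, and handles the general $G,R$ in one pass; the paper's computation, in exchange, produces explicit quantitative bounds such as $1/r_0$ for the diagonal resolvent entries. Your treatment of (b) is also more self-contained than the paper's: you re-run the fixed-$\beta\neq 0$ contradiction without using surjectivity of $D^*$, rather than importing $\sigma(A)\cap\i\R\subseteq\{0\}$ from the proof of (a). Two small points you should make explicit. First, Proposition~\ref{prop:many_stable}(d) as stated requires $\sigma(A)\subseteq\{\re z<0\}$, which does not follow from $\i\R\subseteq\rho(A)$ alone; you get it because $A$ is dissipative (Proposition~\ref{prop:lyap_RG}(b)), so $\sigma(A)$ lies in the closed left half-plane, and your argument removes the imaginary axis. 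Second, the step from ``bounded below'' to ``closed range'' uses closedness of $A$; this is standard but worth a word since it is what lets density of the range upgrade to surjectivity.
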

\begin{proof}
First, note that by Lemma~\ref{lem:stabwithH} we can assume that $H$ is the identity.  To show exponential stability, we first consider the case that $G=0$ and $R=r_0I_X$, i.e.
\begin{align*}
%\label{eq:A_simple}
A=\begin{bmatrix} 0&-D^*\\D&-r_0 I_X
\end{bmatrix}.
\end{align*}
In this case, we verify the characterization of exponential stability given by Proposition~\ref{prop:many_stable}~(d). By Lemma \ref{lem:2x2_inv} we have that $z\in\rho(A)$ if  $z(z+r_0)\notin(-\infty,0]$. This holds in particular if $\re z\geq 0$ and $z\neq 0$. Thus,
\[
\sigma(A)\subseteq\{z\in\C~|~\re z<0\}\cup\{0\}.
\]
Next, we show that $0\notin\sigma(A)$ holds. Since $D^*$ is onto $\ker D=(\ran D^*)^\perp=\{0\}$ implies that $D$ is injective. Hence, by Lemma~\ref{lem:spec}, $A$ is injective. Furthermore, using the closed range theorem we have that $\ran D$ is closed, and using the orthogonal decomposition $X=\ran(D)\oplus \ker D^*$ it follows that $\ran(D^*D)=\ran D^*=X$. This can be used to conclude the surjectivity of $A$. To this end let $(y_1,0)\in X\times X$ be given. Then there exists $Dx_1\in\dom D^*$ such that $y_1=-D^*Dx_1$. Hence for all $y_1\in X$ there exists $x_1\in X$ such that $A(r_0x_1,Dx_1)=(y_1,0)$ holds. This implies $\ran A\supseteq X\times \{0\}$. On the other hand, $X=\ran D\oplus \ker D^*$ implies that for $y_2=y_2^1+y_2^2$ with $y_2^1\in\ran D$ we have $A(y_2^1,-r_0^{-1}y_2^2)=(0,y_2)$. Hence $A$ is surjective which finally leads to $0\notin\sigma(A)$.

Furthermore, the resolvent along the imaginary axis for $\beta\neq 0$ is given by 
\begin{align}
   \nonumber  &~~~~\begin{bmatrix}
(\i\beta +r_0)I_X & D^*\\-D&\i\beta I_X
\end{bmatrix}^{-1}
\\&=\begin{bmatrix}
\i\beta &-D^*\\D&(\i\beta+r_0)I_X
\end{bmatrix}\begin{bmatrix}
((-\beta^2+\i\beta r_0)I_X+D^*D)^{-1}&0\\0&((-\beta^2+\i\beta r_0)I_X+DD^*)^{-1} \nonumber
\end{bmatrix}
\\&=\begin{bmatrix}
\i\beta ((-\beta^2+\i\beta r_0)I_X+D^*D)^{-1}&-D^*((-\beta^2+\i\beta r_0)I_X+DD^*)^{-1}\\D((-\beta^2+\i\beta r_0)I_X+D^*D)^{-1}&(\i\beta+r_0)((-\beta^2+\i\beta r_0)I_X+DD^*)^{-1} \label{eq:block_resolvent}
\end{bmatrix}.
\end{align}
We proceed by estimating the four block entries of the operator in \eqref{eq:block_resolvent}. First, the self-adjointness of $D^*D$ implies the resolvent estimate, see e.g.\ \cite[Chapter V,  \S  3.5]{Kat95}
\[
\|\i\beta ((\beta^2-\i\beta r_0)I_X-D^*D)^{-1}\|\leq \frac{|\i\beta|}{\text{dist}(\beta^2-\i\beta r_0,\sigma(D^*D))}\leq \frac{|\beta|}{|\beta|r_0}\leq \frac{1}{r_0},
\]
where $\rm{dist}(\beta^2-\i\beta r_0,\sigma(D^*D))$ denotes the distance between the point $\beta^2-\i\beta r_0\in\C$ and the closed set $\sigma(D^*D)\subseteq\C$. Similarly, by replacing $D$ with $D^*$, we can estimate
\[
\|(\i\beta+r_0) ((\beta^2-\i\beta r_0)I_X-DD^*)^{-1}\|\leq \frac{|\i\beta+r_0|}{\text{dist}(\beta^2-\i\beta r_0,\sigma(DD^*))}\leq \frac{|\i\beta+r_0|}{|\beta|r_0}\leq  \frac{1+|\beta|^{-1}r_0}{r_0},
\]
which is bounded for large values of $\beta$. It remains to bound the off-diagonal entries. To this end, consider $\gamma\in\C$ with $-\gamma\in\rho(D^*D)$ and we have for all $x\in X$
\begin{align*}
\|D(\gamma I_X+D^*D)^{-1}x\|^2&=\langle D(\gamma I_X+D^*D)^{-1}x,D(\gamma I_X+D^*D)^{-1}x\rangle\\&=\langle D^*D(\gamma I_X+D^*D)^{-1}x,(\gamma I_X+D^*D)^{-1}x\rangle\\&=\langle x,(\gamma I_X+D^*D)^{-1}x\rangle-\gamma \langle (\gamma I_X+D^*D)^{-1}x,(\gamma I_X+D^*D)^{-1}x\rangle.
\end{align*}
To estimate the off-diagonal entries, we choose $\gamma=-\beta^2+\i\beta r_0$. Then we already showed that $
\|(\gamma I_X+D^*D)^{-1}\|$ is bounded for sufficiently large $\beta$ and independently of $\beta$. Furthermore, we have according to the resolvent estimate for all $x\in X$
\begin{align*}
\left|\gamma \langle (\gamma I_X+D^*D)^{-1}x,(\gamma I_X+D^*D)^{-1}x\rangle\right|&\leq |\gamma|\|(\gamma I_X+D^*D)^{-1}\|^2\|x\|^2\\&\leq \frac{|-\beta^2+\i\beta r_0|}{|\beta|^2r_0^2}\|x\|^2\\
&\leq \frac{1+|\beta|^{-1}r_0}{r_0^2}\|x\|^2.
\end{align*}
In combination, this shows that the lower off-diagonal entry is uniformly bounded for large values of $\beta$. Similarly, one can show the uniform boundedness of the upper off-diagonal entry of \eqref{eq:block_resolvent} for large values of $\beta$. From this, we can conclude the boundedness of the resolvent uniformly in $\beta$. Hence, Proposition~\ref{prop:many_stable}~(d) provides us the exponential stability in this case. 

Therefore, by Proposition~\ref{prop:lyap} there exists a solution $P\in \eL(X)$ to the Lyapunov inequality \eqref{lyap_strict} satisfying
\[
\langle Ax,Px\rangle+\langle Px,Ax\rangle=\langle \begin{smallbmatrix}
0&-D^*\\D&-r_0I_X
\end{smallbmatrix},Px\rangle+\langle Px,\begin{smallbmatrix}
0&-D^*\\D&-r_0I_X
\end{smallbmatrix}x\rangle \leq -\langle x,x\rangle.
\]
This, together with assumption (A4) and $G\geq 0$ implies
\[
\langle \begin{smallbmatrix}
-G&-D^*\\D&-R
\end{smallbmatrix},Px\rangle+\langle Px,\begin{smallbmatrix}
-G&-D^*\\D&-R
\end{smallbmatrix}x\rangle \leq -\langle x,x\rangle 
\]
and therefore by Proposition~\ref{prop:lyap} the operator $A$ generates an exponentially stable semigroup.

We continue with the proof of (b). By Proposition~\ref{prop:lyap_RG}, $A$ generates a contraction semigroup. Applying Lemma~\ref{lem:stabwithH} shows that $A$ generates a contraction semigroup, which is in particular bounded. To prove the strong stability of the semigroup generated by $A$ we apply Proposition~\ref{prop:many_stable} (c). 
From the proof of part (a) it follows that the intersection $\sigma(A)\cap\i\R$ contains at most $0$ and is therefore countable. Furthermore, this implies for all  $\lambda\in\i\R\setminus\{0\}$ that the folllowing holds 
\[
(\ker(A^*-\lambda))^\perp=\overline{\ran(A-\overline{\lambda})}=X\times X.
\]
This implies $\ker(A^*-\lambda)=\{0\}$ and therefore $A^*$ has no eigenvalues $\lambda\in\i\R\setminus\{0\}$. To show the strong stability of the semigroup generated by $A$ it remains to show that $A^*$ is injective. If $\begin{smallbmatrix}
G\\D
\end{smallbmatrix}$ is injective then by Lemma~\ref{lem:spec}, $A^*$ is injective. %Furthermore, since $H$ is invertible it is well-known that we have $(AH)^*=H^*A^*$ and therefore $(AH)^*$ is injective, see e.g.\ \cite[Lemma 3.5.5]{Sta05}. 
Hence, by Proposition \ref{prop:many_stable} (c), $A$ generates a strongly stable semigroup.
\end{proof}

Another sufficient condition for exponential stability can be obtained under compact resolvent assumptions, see also \cite{Skr21} for related results. Here we will say that an eigenvalue $\lambda$ of an operator $A$ is \emph{isolated} if it is an isolated point of $\sigma(E, A)$. Furthermore, an eigenvalue $\lambda$ of $A$ has \emph{finite multiplicity} if $\dim\ker(A-\lambda I)<\infty$ holds. 

\begin{proposition}
\label{prop:compact}
Let $D$ be a closed densely defined operator in a Hilbert space $X$ such that $(I_X+DD^*)^{-1}$ and $(I_X+D^*D)^{-1}$ are compact and let $R,G\in \eL(X)$ be nonnegative and $H\in \eL(X\times X)$ uniformly positive. Then the spectrum of $A=\begin{smallbmatrix}
-G&-D^*\\D&-R
\end{smallbmatrix}$ consists of isolated eigenvalues with finite multiplicity. If $\begin{smallbmatrix}
G\\D
\end{smallbmatrix}$ is injective then $AH$ generates an exponentially stable semigroup.
\end{proposition}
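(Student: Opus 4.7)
I would prove the two assertions separately. For the spectral structure claim, I would show that $A$ has compact resolvent. Write $A = A_0 + B$ with $A_0 = \begin{smallbmatrix}0 & -D^* \\ D & 0\end{smallbmatrix}$ skew-adjoint and $B = \begin{smallbmatrix}-G & 0 \\ 0 & -R\end{smallbmatrix}$ bounded. Applying Lemma~\ref{lem:2x2_inv} with $\alpha = \beta = 1$ (after absorbing a sign via $\tilde D = -D$) represents $(I - A_0)^{-1}$ as a $2 \times 2$ block operator whose diagonal entries $(I + D^*D)^{-1}$ and $(I + DD^*)^{-1}$ are compact by hypothesis; the off-diagonal entries $\pm D(I+D^*D)^{-1}$ and $\pm D^*(I+DD^*)^{-1}$ factor as bounded operators (for instance $D(I+D^*D)^{-1/2}$, bounded by the Borel functional calculus of $D^*D$) composed with compact ones like $(I+D^*D)^{-1/2}$, so are themselves compact. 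Hence $(I - A_0)^{-1}$ is compact. Since $A$ generates a contraction semigroup by Proposition~\ref{prop:lyap_RG}(b), one has $1 \in \rho(A)$, and the resolvent identity
\[
(I - A)^{-1} = (I - A_0)^{-1} + (I - A)^{-1} B (I - A_0)^{-1}
\]
transfers compactness to $(I - A)^{-1}$. The standard compact-resolvent machinery then yields the isolated-eigenvalue, finite-multiplicity conclusion.

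For exponential stability, Lemma~\ref{lem:stabwithH} reduces matters to proving that $A$ itself generates an exponentially stable semigroup. Lemma~\ref{lem:spec}, combined with the injectivity of $\begin{smallbmatrix}G\\D\end{smallbmatrix}$, gives $\ker A = \{0\}$ (so $0 \notin \sigma(A)$) and bounds any nonreal eigenvalue $r + i\omega$ of $A$ by $r \le -r_0/2$, excluding $\sigma(A) \cap i\R$. The real eigenvalues are nonpositive by dissipativity, and the discreteness of $\sigma(A)$ established in the previous paragraph prevents their accumulation at $0$. Combining these observations produces a spectral gap $\sup\,\re\sigma(A) \le -\epsilon < 0$.

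The main obstacle is then upgrading this spectral gap to the uniform resolvent bound $\sup_{\beta \in \R}\|(i\beta - A)^{-1}\| < \infty$ required by Proposition~\ref{prop:many_stable}(d). I would adapt the blockwise resolvent computation from the proof of Proposition~\ref{prop:exp_stable}(a): express $(i\beta - A)^{-1}$ via Lemma~\ref{lem:2x2_inv} and estimate each block using the spectral theorem for $D^*D$ and $DD^*$ together with the nonnegativity of $R$ and $G$ and the distance from $i\beta$ to the (discrete) spectrum of $A$. A fallback is to construct a Lyapunov operator satisfying Proposition~\ref{prop:lyap}, exploiting the fact that $B$ is dissipative and $A_0$ has purely imaginary discrete spectrum, so that the high-frequency behaviour of $(i\beta - A)^{-1}$ is essentially governed by $(i\beta - A_0)^{-1}$ plus the bounded perturbation $B$.
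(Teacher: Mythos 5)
Your plan is essentially the paper's own proof: compactness of the resolvent is obtained exactly as in the paper by factoring the resolvent of $\begin{smallbmatrix}0&-D^*\\D&0\end{smallbmatrix}$ into blocks that are compact because $(I_X+D^*D)^{-1/2}$, $(I_X+DD^*)^{-1/2}$ are compact and $D(I_X+D^*D)^{-1/2}$, $D^*(I_X+DD^*)^{-1/2}$ are bounded, then transferring compactness to $(\lambda-A)^{-1}$ via the resolvent identity, while the stability claim is reduced via Lemma~\ref{lem:stabwithH} and Lemma~\ref{lem:spec} (injectivity of $\begin{smallbmatrix}G\\D\end{smallbmatrix}$ plus discreteness gives $0\in\rho(A)$) to the argument of Proposition~\ref{prop:exp_stable}, which is precisely how the paper concludes (``the exponential stability follows as in the proof of Proposition~\ref{prop:exp_stable}''). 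One caution on your phrasing of the resolvent bound: since $A$ is not normal, ``the distance from $\i\beta$ to the (discrete) spectrum of $A$'' does not control $\|(\i\beta-A)^{-1}\|$ — the mechanism that actually works (and that your reference to Proposition~\ref{prop:exp_stable}(a) implicitly invokes) is the blockwise estimate for the scalar case $G=0$, $R=r_0I_X$ using distances to the spectra of the \emph{self-adjoint} operators $D^*D$ and $DD^*$, followed by the Lyapunov transfer to general nonnegative $G$ and $R\geq r_0 I_X$.
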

\begin{proof}
It is well-known, see e.g.\ \cite[Theorem 3.49]{Kat95}, that $(I_X+D^*D)^{-1/2}$ is compact and that $D(I_X+D^*D)^{-1/2}$ is bounded see \cite[Lemma 5.8]{Sch12} and hence $D(I_X+D^*D)^{-1}$ is compact. Replacing $D$ by $D^*$ yields the compactness of $D^*(I_X+DD^*)^{-1}$. In summary, this implies the compactness of each of the block entries of 
\[
\begin{bmatrix}
-I_X &-D^*\\D&-I_X
\end{bmatrix}^{-1}=\begin{bmatrix}
-I_X &D^*\\-D&-I_X
\end{bmatrix}\begin{bmatrix}
(I_X+D^*D)^{-1}&0\\0&(I_X+DD^*)^{-1}
\end{bmatrix}
\]
and hence of the whole block operator. 
Hence $\left(\begin{smallbmatrix}
0&-D^*\\D&0
\end{smallbmatrix}-\lambda\right)^{-1}$ is compact for all $\lambda\in\rho\left(\begin{smallbmatrix}
0&-D^*\\D&0
\end{smallbmatrix}\right)$. This together with the resolvent identity 
\begin{align*}
\left(\begin{bmatrix}
0&-D^*\\D&0
\end{bmatrix}-\lambda\right)^{-1} &=\left(A+\begin{bmatrix}
G&0\\0&R
\end{bmatrix}-\lambda\right)^{-1} \\&=(A-\lambda)^{-1}-\left(\begin{bmatrix}
0&-D^*\\D&0
\end{bmatrix}-\lambda\right)^{-1}\begin{bmatrix}
G&0\\0&R
\end{bmatrix}(A-\lambda)^{-1}
\end{align*}
implies that $A$ has compact resolvent. By \cite[Theorem III.6.29]{Kat95} it follows that the spectrum of $A$ consists of isolated eigenvalues with finite multiplicity. Furthermore, $\begin{smallbmatrix}
G\\D
\end{smallbmatrix}$ injective implies with Lemma~\ref{lem:spec} that $A$ is injective and therefore $0\in\rho(A)$. Hence the exponential stability follows as in the proof of Proposition \ref{prop:exp_stable}.
\end{proof}

In the following, we consider the case where $D$ is not injective or $D^*$ is not surjective and restrict the operator $A$ to the Hilbert space $\ker A^\perp=\overline{\ran\begin{bmatrix}G&D^*\end{bmatrix}}\times X$. We define  
\[
\hat A:=A|_{\dom A\cap \ker A^\perp}
\] 
and show that the restriction $\hat A$ also generates a semigroup $(\hat T(t))_{t\geq 0}$. 

Furthermore, if $A$ fulfills (A4), then $\hat A$ fulfills (A4). Hence, if $D$ is not injective, this restriction allows us to conclude stability for the semigroup generated by $\hat A$. To this end, we will use the orthogonal projector $P_{\ker A}: X^2\rightarrow\ker A$ onto the kernel of $A$.

\begin{corollary}
\label{cor:hypocoercive}
Let $A=\begin{smallbmatrix}-G&-D^*\\D&-R\end{smallbmatrix}$ be given as above satisfying (A4)  and generating the semigroup $(T(t))_{t\geq 0}$. Then $\hat A:=A|_{\dom A\cap \ker A^\perp}$ generates a strongly stable semigroup $(\hat T(t))_{t\geq 0}$. Furthermore, if one of the following conditions holds
\begin{itemize}
    \item[\rm (i)] $D^*$ has closed range, or
    \item[\rm (ii)] $(I_X+D^*D)^{-1/2}$ and  $(I_X+DD^*)^{-1/2}$ are compact 
\end{itemize}
 then $(\hat T(t))_{t\geq 0}$ is exponentially stable and there exist $\alpha,M>0$ such that for all $x_0\in X\times X$, $t\geq 0$ the following holds 
\[
\|T(t)x_0-P_{\ker A}x_0\|\leq Me^{-\alpha t}\|(I-P_{\ker A})x_0\|.
\]
\end{corollary}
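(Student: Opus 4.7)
The plan is to use Lemma~\ref{lem:spec} (specifically \eqref{ker_A}), which gives $\ker A=\ker A^*$ and identifies $\ker A^\perp=V\times X$ with $V:=(\ker G\cap\ker D)^\perp$, to decompose the dynamics into a static part on $\ker A$ and a restricted flow on $\ker A^\perp$ to which the earlier propositions can be applied.

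First, for $x\in\ker A=\ker A^*$ one has $T(t)x=T(t)^*x=x$, so
\[
\langle T(t)y,x\rangle=\langle y,T(t)^*x\rangle=\langle y,x\rangle=0\quad\text{for all } y\in\ker A^\perp,\ x\in\ker A,
\]
hence $\ker A^\perp$ is $T(t)$-invariant and $\hat T(t):=T(t)|_{\ker A^\perp}$ is a $C_0$-contraction semigroup on $V\times X$ with generator $\hat A$. Using $\ran G,\ran D^*\subseteq V$ together with the orthogonal decomposition $u=u_V+u_0$ for $u\in\dom D$ (where $u_0\in\ker G\cap\ker D\subseteq\ker D$, so $Du=Du_V$), a direct computation shows that $\hat A$ retains the block structure
\[
\hat A=\begin{bmatrix}-\hat G & -\hat D^*\\ \hat D & -R\end{bmatrix},\qquad \hat G:=G|_V,\quad \hat D:=D|_{V\cap\dom D},
\]
with $\hat D$ closed and densely defined in $V$, and $\hat D^*$ equal to $D^*$ viewed as a map into $V$. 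Moreover $\hat A$ inherits (A4), and $\begin{smallbmatrix}\hat G\\\hat D\end{smallbmatrix}$ is injective on $V$ by construction.

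The stability of $(\hat T(t))_{t\geq 0}$ now follows from the earlier results applied to $\hat A$. Proposition~\ref{prop:exp_stable}(b) yields strong stability unconditionally. Under (ii), the compactness of $(I_X+D^*D)^{-1/2}$ and $(I_X+DD^*)^{-1/2}$ transfers to $\hat D$ (by restriction to the $D^*D$-invariant subspace $V$ and the identity $\hat D\hat D^*=DD^*$), so Proposition~\ref{prop:compact} yields exponential stability. Under (i), the closed range of $D^*$ should provide surjectivity of $\hat D^*:X\to V$, and Proposition~\ref{prop:exp_stable}(a) then applies to $\hat A$. The quantitative bound finally follows from the splitting $T(t)=I_{\ker A}\oplus\hat T(t)$: writing $x_0=P_{\ker A}x_0+(I-P_{\ker A})x_0$ one obtains $T(t)x_0-P_{\ker A}x_0=\hat T(t)(I-P_{\ker A})x_0$, and the exponential stability bound $\|\hat T(t)\|\leq Me^{-\alpha t}$ delivers the estimate.

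I expect the main obstacle to be the case (i): since $V$ may strictly contain $\overline{\ran D^*}$, with residual $V\ominus\overline{\ran D^*}=\ker D\cap V$, the map $\hat D^*$ need not be surjective onto $V$ unless one has the additional inclusion $\ker D\subseteq\ker G$, so Proposition~\ref{prop:exp_stable}(a) cannot be applied verbatim to $\hat A$. Bridging this gap requires combining the spectral gap of $D^*$ coming from its closed range (which yields a resolvent estimate on $(\ker D)^\perp\times X$) with the strict positivity of $\hat G$ on the residual subspace $\ker D\cap V$ (which is forced to be nontrivial precisely where $D$ fails to contribute) in order to verify the uniform resolvent bound along $\i\R$ demanded by Proposition~\ref{prop:many_stable}(d).
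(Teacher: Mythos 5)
Your overall strategy---splitting $T(t)$ into the identity on $\ker A=\ker A^*$ and a restricted semigroup on $\ker A^\perp$, then feeding the restriction back into Propositions~\ref{prop:exp_stable} and \ref{prop:compact}---is the paper's strategy. Your first part is correct and matches the paper: invariance of $\ker A^\perp$ via $T(t)^*x=x$ on $\ker A^*=\ker A$, the block structure of $\hat A$ on $V\times X$ with $V=(\ker G\cap\ker D)^\perp=\overline{\ran\begin{smallbmatrix}G&D^*\end{smallbmatrix}}$, the injectivity of $\begin{smallbmatrix}\hat G\\ \hat D\end{smallbmatrix}$ on $V$, and the final estimate $T(t)x_0-P_{\ker A}x_0=\hat T(t)(I-P_{\ker A})x_0$. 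The strong-stability claim and case (ii) go through as you describe (under (ii), $V$ reduces $D^*D$, so compactness of the resolvents passes to $\hat D$).

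The genuine gap is exactly where you flagged it: case (i). Because you keep $G$ in play, your restricted space is $V\times X$, which strictly contains $\overline{\ran D^*}=(\ker D)^\perp$ whenever $\ker D\not\subseteq\ker G$; then $\hat D^*$ is not onto $V$ and Proposition~\ref{prop:exp_stable}(a) cannot be invoked. Your sketched repair does not close this: on the residual subspace $\ker D\cap V$ you only know that $G$ is \emph{injective} (its kernel meets $\ker D\cap V$ trivially), and for a bounded nonnegative operator on an infinite-dimensional space injectivity gives no uniform lower bound, so the ``strict positivity of $\hat G$ on the residual subspace'' you want to combine with the spectral gap of $D^*$ is simply not available. (Under (ii) the issue is harmless, since compactness of $(I_X+D^*D)^{-1/2}$ forces $\ker D$, hence $\ker D\cap V$, to be finite dimensional.) The paper avoids the mismatch of spaces by a different route: it first treats $G=0$, where $\ker A^\perp=\overline{\ran D^*}\times X$, $\hat D=D|_{\overline{\ran D^*}}$ is injective and, under (i), $D^*|_{\overline{\ran D}}$ maps onto $\ran D^*=\overline{\ran D^*}$, so Proposition~\ref{prop:exp_stable}(a) applies verbatim; general nonnegative $G$ is then reinstated through the Lyapunov-inequality perturbation argument from the proof of Proposition~\ref{prop:exp_stable}(a), i.e.\ a solution $P$ of \eqref{lyap_strict} for the $G=0$ operator still satisfies \eqref{lyap_strict} after adding $-\diag(G,0)$. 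To complete your direct approach you would either have to establish the uniform resolvent bound on $\i\R$ on $V\times X$ by hand, or perform the same reduction to $G=0$.
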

\begin{proof}
First it is shown that the semigroup $(T(t))_{t\geq 0}$ generated by $A$ fulfills
\begin{align}
\label{eq:gen_kerA}
T(t)=\begin{bmatrix}
\tilde T(t)&0\\0& I_{\ker A}
\end{bmatrix}\quad  \text{in $X\times X=\ker A^\perp\oplus \ker A$,}
\end{align}
where $\tilde T(t):=T(t)|_{\ker A^\perp}$ is again strongly continuous. If $x\in\ker A$, then the function $t\mapsto T(t)x$ is a classical solution $x(\cdot)$ with $x(0)=x$. Furthermore,  $\ker A$ is a closed $A$-invariant subspace which implies  $x(t)=T(t)x\in\ker A$ meaning that $\dot x(t)=Ax(t)=0$ and therefore $x(t)=x(0)=x$. Hence $T(t)x=x$ holds for all $x\in\ker A$. Furthermore, $A^*$ generates the strongly continuous semigroup $\{T(t)^*\}_{t\geq 0}$. If we apply the previous argument to $\{T(t)^*\}_{t\geq 0}$ with generator $A^*$ we obtain 
\[
T(t)^*y\in\ker A^*\quad \text{for all $y\in\ker A^*$.}
\]
By Lemma \ref{lem:spec} we have $\ker A^*=\ker A$ which implies for all $x\in(\ker A)^\perp=(\ker A^*)^\perp$ that
\[
\langle T(t)x,y\rangle=\langle x,T(t)^*y\rangle=0
\]
holds. Hence $T(t)x\in(\ker A)^\perp$ holds for all $x\in(\ker A)^\perp$. Finally, this proves \eqref{eq:gen_kerA}. Then by the generator definition, this implies that $A=\begin{smallbmatrix}
\hat A&0\\0& 0
\end{smallbmatrix}$ with respect to the decomposition $X\times X=\ker A^\perp\oplus \ker A$ and $\tilde T(t):=\hat T(t)$.

To show strong and exponential stability, we consider again the case $G=0$. In this case $\ker A=\ker D$ and therefore $\hat A$ is given in the Hilbert space $\hat X=\overline{\ran D^*}\times X$ by 
\[
\hat A=\begin{bmatrix} 0&\begin{bmatrix}-D^*|_{\overline{\ran D}} &0 \end{bmatrix}\\ \begin{bmatrix} D|_{\overline{\ran D^*}} \\ 0\end{bmatrix} &-R\end{bmatrix}.
\] 
 In particular, $D|_{\overline{\ran D^*}}$ is injective and therefore Proposition \ref{prop:exp_stable} (b) implies that $\hat A$ generates a strongly stable semigroup. If the condition (i) holds, then $D^*|_{\overline{\ran D}}$ is onto and therefore Proposition~\ref{prop:exp_stable} (a) implies that $\hat A$ generates an exponentially stable semigroup. Furthermore, if  (ii) is fulfilled then Proposition~\ref{prop:compact} yields the exponential stability of $\hat A$. As we argued before this then implies strong stability and exponential stability of $\hat A$ for arbitrary nonnegative $G\in\eL(X)$.

Finally, using the decomposition $x_0=P_{\ker A}x_0+P_{\ker A^\perp}x_0$ we find that 
\[
\|T(t)x_0-P_{\ker A}x_0\|=\|\hat T(t)P_{\ker A^\perp}x_0\|\leq Me^{-\alpha t}\|P_{\ker A^\perp}x_0\|
\]
for some $M,\alpha>0$ and all $t\geq 0$ since $\hat T$ is exponentially stable by assumption.
\end{proof}

In applications to transport problems, the operator $D$ is a derivative operator defined on a  Sobolev space and hence under suitable assumptions on the domain, it is a Fredholm operator. In this case, $\ran D$ is closed and by the closed range theorem $\ran D^*$ is closed as well. Additionally, the Sobolev embedding theorems imply the compactness of the resolvents $(I_X+D^*D)^{-1/2}$ and  $D(I_X+D^*D)^{-1/2}$. In this context, it was shown in \cite[Theorem 3.5]{EggK18} that if (A4) holds then the solutions of \eqref{def:bctrl} with $u=0$ are exponentially stable using Galerkin approximation, see also \cite{EggKLSMM18} for related results for non-linear replacements for $R$ and $G$.

In the following we study the solvability of the boundary control problem for non-zero input functions $u$. To guarantee the existence of solutions to the boundary control problem \eqref{def:bctrl}, we assume that the following holds:
\begin{itemize}
\item[(A5)]
The triplet $\{\Xc,\Gamma_0,\Gamma_1\}$ is a boundary triplet for the symmetric operator $S:=\begin{smallbmatrix}0& \i D\\ \i D& 0\end{smallbmatrix}$, which means that $\Gamma_0,\Gamma_1:\dom S^*\rightarrow\Xc$ are assumed to be linear mappings that fulfill the following two conditions
\begin{itemize}
    \item[(i)]  $[\Gamma_0,\Gamma_1]:\dom S^*\rightarrow \Xc\times\Xc$ is surjective;
    \item[(ii)] the following abstract Green identity holds
    \[
\langle S^*f,g\rangle-\langle f ,S^*g\rangle=\langle\Gamma_1f,\Gamma_0g\rangle-\langle\Gamma_0f,\Gamma_1g\rangle,\quad f,g\in \dom S^*.
    \] 
\end{itemize}
\end{itemize}
The framework of boundary triplets was developed in \cite{Gorb89,Derk91} to describe the extensions of symmetric operators, see also \cite{Behr20}. 

\begin{lemma}
\label{lem:with_H}
Let $S$ be a closed densely defined symmetric operator in a Hilbert space $X$ and consider a uniformly positive operator $H\in\mathcal{L}(X)$. Then $SH$ is symmetric in $X$ with respect to the scalar product given by $\langle x,y\rangle_{H}:=\langle H x,y\rangle$ for all $x,y\in X$. Moreover, if  $\{\Xc,\Gamma_0,\Gamma_1\}$ is a boundary triplet for $S$ then  $\{\Xc,\Gamma_0H,\Gamma_1H\}$ is a boundary triplet for $SH$.
\end{lemma}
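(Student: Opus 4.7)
The plan is to first note that since $H\in\eL(X)$ is self-adjoint and uniformly positive, $\langle\cdot,\cdot\rangle_H$ defines a scalar product on $X$ that is topologically equivalent to the original one, and $H$ is a Hilbert space isomorphism. In particular, $\dom(SH)=H^{-1}\dom S$ is dense in $X$, and $SH$ is well defined. The symmetry of $SH$ with respect to $\langle\cdot,\cdot\rangle_H$ is then a direct calculation: for any $x,y\in\dom(SH)$ we have $Hx,Hy\in\dom S$, so using $H^*=H$ and the symmetry of $S$,
\[
\langle SHx,y\rangle_H=\langle H\,SHx,y\rangle=\langle SHx,Hy\rangle=\langle Hx,SHy\rangle=\langle x,SHy\rangle_H.
\]

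The decisive step is to identify the adjoint of $SH$ with respect to $\langle\cdot,\cdot\rangle_H$, which I will denote by $(SH)^{*_H}$. By the definition of the adjoint, $y\in\dom(SH)^{*_H}$ with $(SH)^{*_H}y=w$ is equivalent to $\langle HSHx,y\rangle=\langle Hx,w\rangle$ for all $x\in\dom(SH)$. The substitution $u:=Hx$ lets $u$ range over $\dom S$ and reduces this to $\langle Su,Hy\rangle=\langle u,w\rangle$ for all $u\in\dom S$. Thus $y\in\dom(SH)^{*_H}$ iff $Hy\in\dom S^*$, and then $w=S^*Hy$. This yields the identity
\[
(SH)^{*_H}=S^*H,\qquad \dom(SH)^{*_H}=H^{-1}\dom S^*,
\]
and shows in particular that the mappings $\Gamma_0H$ and $\Gamma_1H$ are well defined on $\dom(SH)^{*_H}$.

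With this identification in hand, the two boundary triplet conditions for $\{\Xc,\Gamma_0H,\Gamma_1H\}$ transfer from the corresponding conditions for $\{\Xc,\Gamma_0,\Gamma_1\}$. Surjectivity of $[\Gamma_0H,\Gamma_1H]:\dom(SH)^{*_H}\to\Xc\times\Xc$ follows because $H$ is a bijection of $H^{-1}\dom S^*$ onto $\dom S^*$ and $[\Gamma_0,\Gamma_1]$ is surjective on $\dom S^*$ by assumption (A5)(i). For the abstract Green identity, I use $H^*=H$ to compute, for $f,g\in\dom(SH)^{*_H}$,
\[
\langle (SH)^{*_H}f,g\rangle_H-\langle f,(SH)^{*_H}g\rangle_H=\langle S^*Hf,Hg\rangle-\langle Hf,S^*Hg\rangle,
\]
and then apply the Green identity (A5)(ii) for $S$ to the pair $Hf,Hg\in\dom S^*$, which gives exactly $\langle\Gamma_1Hf,\Gamma_0Hg\rangle-\langle\Gamma_0Hf,\Gamma_1Hg\rangle$.

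The only delicate point is the correct identification of $(SH)^{*_H}$; one must carefully separate the two scalar products so that the bounded factor $H$ migrates to the right places. As an alternative, one could observe that the positive square root $H^{1/2}$ realises a unitary $(X,\langle\cdot,\cdot\rangle_H)\to(X,\langle\cdot,\cdot\rangle)$ conjugating $SH$ to $H^{1/2}SH^{1/2}$, and then transport an already-known boundary triplet through this unitary; but the direct computation above is shorter and avoids introducing $H^{1/2}$. Once the adjoint is identified, symmetry, surjectivity, and the Green identity are all immediate consequences of the corresponding properties of $S$.
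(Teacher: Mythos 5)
The paper states this lemma without proof, so there is nothing to compare against; your argument supplies the verification the authors leave implicit, and it is correct. The one genuinely delicate point --- identifying the adjoint of $SH$ with respect to $\langle\cdot,\cdot\rangle_H$ as $S^*H$ with domain $H^{-1}\dom S^*$ --- is handled properly via the substitution $u=Hx$, and once that is in place the symmetry, the surjectivity of $[\Gamma_0H,\Gamma_1H]$, and the abstract Green identity all transfer exactly as you describe. The only implicit step worth making explicit is that uniform positivity of $H$ on the complex Hilbert space $X$ (i.e.\ $\langle Hx,x\rangle\geq m\|x\|^2$ real for all $x$) already forces $H=H^*$, which is what licenses both the claim that $\langle\cdot,\cdot\rangle_H$ is a scalar product and your repeated use of $\langle Hv,w\rangle=\langle v,Hw\rangle$.
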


In the following, we obtain a power-balance equation for the classical solutions of \eqref{def:bctrl}. 
\begin{proposition}
\label{prop:pbe}
Let $\mathfrak{A}$ and $H$ be given by (A1)-(A3) and let $\{\Xc,\Gamma_0,\Gamma_1\}$ be a boundary triplet as in (A5). Then for all $u\in C^2([0,\infty),\mathcal{X})$ and $z_0\in X\times X$ with $\Gamma_0H z_0=u(0)$  exists $z\in C^1([0,\infty),X\times X)\cap C([0,\infty),\dom (\mathfrak{A}H))$ satisfying
\begin{align}
\label{eq:A_bctrl}
\dot z(t)=\mathfrak{A}H z(t), \quad u(t)=\Gamma_0H z(t),\,\, t\geq 0,\quad  z(0)=z_0.
\end{align}
Furthermore, if $y(t):=-\i\Gamma_1H z(t)$ is continuous the following power balance equations hold 
\begin{align}
\label{eq:PBE}
\re \langle\mathfrak{A}H z(t),H z(t)\rangle&=\re \langle u(t),y(t)\rangle-\left\langle\begin{bmatrix}G&0\\0&R
\end{bmatrix}H z(t),H z(t)\right\rangle,\\
\langle H z(t),z(t)\rangle-\langle H z(0),z(0)\rangle&\leq 2\int_0^t\re \langle u(\tau),y(\tau)\rangle d\tau-\left\langle\begin{bmatrix}G&0\\0&R
\end{bmatrix} H z(t), H z(t)\right\rangle.\label{scnd_PBE}
\end{align}
\end{proposition}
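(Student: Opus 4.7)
The plan is to prove the proposition in three steps: (i) establish the existence of a classical solution by the abstract boundary-triplet existence theory of \cite{MalS06,MalS07}; (ii) differentiate $\langle Hz,z\rangle$ and invoke the abstract Green identity from (A5)(ii) to obtain (\ref{eq:PBE}); (iii) integrate the pointwise identity to arrive at (\ref{scnd_PBE}).

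For (i), the strategy is as follows. By Lemma~\ref{lem:with_H}, $\{\Xc,\Gamma_0 H,\Gamma_1 H\}$ is a boundary triplet for $SH$ with respect to the $H$-weighted inner product $\langle H\cdot,\cdot\rangle$. Writing $\mathfrak{A}H=\i S^*H-\begin{smallbmatrix}G&0\\0&R\end{smallbmatrix}H$ exhibits $\mathfrak{A}H$ as a bounded perturbation of the skew-adjoint operator $\i S^*H$ in this weighted product, so by Lemma~\ref{lem:stabwithH} the restriction $AH=(\mathfrak{A}H)|_{\ker(\Gamma_0H)}$ generates a $C_0$-semigroup. To accommodate the boundary input, the plan is to use the surjectivity in (A5)(i) to select a bounded linear right inverse $B\colon\Xc\to\dom S^*$ with $\Gamma_0 B\mu=\mu$ and $\Gamma_1 B\mu=0$, and set $v(t):=H^{-1}Bu(t)$ so that $v\in C^2([0,\infty),X\times X)$ with $Hv(t)\in\dom\mathfrak{A}$ and $\Gamma_0 Hv(t)=u(t)$. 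The shift $w:=z-v$ would then solve the inhomogeneous Cauchy problem with homogeneous boundary condition $\dot w=AHw+(\mathfrak{A}Hv-\dot v)$, $w(0)=z_0-v(0)$, whose $C^1$ inhomogeneity guarantees a classical solution $w\in C^1([0,\infty),X\times X)\cap C([0,\infty),\dom(AH))$ by standard semigroup theory, and $z:=w+v$ would have the required regularity.

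For (ii), note that assumption (A2) forces $\langle Hx,x\rangle\in\R$ for all $x$, so by polarization $H=H^*$. Hence for a classical solution,
\[
\tfrac{d}{dt}\langle Hz(t),z(t)\rangle=\langle H\dot z,z\rangle+\langle Hz,\dot z\rangle=2\,\re\langle\mathfrak{A}Hz(t),Hz(t)\rangle.
\]
Decomposing $\mathfrak{A}=\i S^*-\begin{smallbmatrix}G&0\\0&R\end{smallbmatrix}$ and applying (A5)(ii) with $f=g=Hz\in\dom S^*$ gives
\[
\langle S^*Hz,Hz\rangle-\langle Hz,S^*Hz\rangle=\langle\Gamma_1 Hz,\Gamma_0 Hz\rangle-\langle\Gamma_0 Hz,\Gamma_1 Hz\rangle.
\]
Substituting $\Gamma_0 Hz=u$ and $\Gamma_1 Hz=\i y$ and extracting the real part of $\i\langle S^*Hz,Hz\rangle$ converts the skew-adjoint contribution into $\re\langle u,y\rangle$; combined with the diagonal dissipation term, this produces (\ref{eq:PBE}).

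For (iii), integrating (\ref{eq:PBE}) on $[0,t]$ gives the identity
\[
\langle Hz(t),z(t)\rangle-\langle Hz(0),z(0)\rangle=2\int_0^t\re\langle u(\tau),y(\tau)\rangle d\tau-2\int_0^t\left\langle\begin{smallbmatrix}G&0\\0&R\end{smallbmatrix}Hz(\tau),Hz(\tau)\right\rangle d\tau,
\]
and the nonnegativity of the dissipation term yields the one-sided inequality (\ref{scnd_PBE}). The main obstacle will be step (i): one must check carefully that the lifting $v(t)=H^{-1}Bu(t)$ actually lands in $\dom(\mathfrak{A}H)$ (and not merely in $\dom(S^*H)$, given the way the off-diagonal $\hat D^*$-blocks interact with $H$), and that the perturbation argument comparing $\mathfrak{A}H$ to $AH$ delivers an inhomogeneity of the regularity demanded by the semigroup theory; this is where the precise hypotheses of \cite{MalS06,MalS07} must be aligned with the present setting.
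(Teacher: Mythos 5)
Your proposal follows essentially the same route as the paper: existence of the classical solution via the boundary-triplet results of Malinen--Staffans (the paper invokes \cite[Theorem 5.2]{MalS07} and \cite[Lemma 2.6]{MalS06} directly, viewing $\mathfrak{A}H$ as a bounded perturbation of the skew-adjoint part, while you additionally sketch the standard lifting argument that underlies those results), then the abstract Green identity of (A5)(ii) applied with $f=g=Hz$ to obtain \eqref{eq:PBE}, and finally integration in time for \eqref{scnd_PBE}. The one point worth flagging is that your integration correctly produces the dissipation contribution as $2\int_0^t\bigl\langle\begin{smallbmatrix}G&0\\0&R\end{smallbmatrix}Hz(\tau),Hz(\tau)\bigr\rangle\,d\tau$, whereas \eqref{scnd_PBE} as printed carries the non-integrated term evaluated at time $t$; discarding the nonnegative integral only yields $\langle Hz(t),z(t)\rangle-\langle Hz(0),z(0)\rangle\leq 2\int_0^t\re\langle u(\tau),y(\tau)\rangle\,d\tau$, so the inequality in its literal form does not follow from your (or the paper's) computation --- the paper's proof has exactly the same gap, and the integrated identity you derive is evidently the intended statement.
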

\begin{proof}
First, we consider the operator
$\mathfrak{J}:=\begin{smallbmatrix} 0& D^* \\   D^*&0\end{smallbmatrix}H$  
and the boundary control system 
\begin{align*}
%\label{eq:J_bctrl}
\tfrac{d}{d t}x(t)=\i\mathfrak{J}H x(t),\quad u(t)=\Gamma_1H x(t),\quad y(t)=\Gamma_0H x(t).
\end{align*}
By Lemma~\ref{lem:with_H} the triplet $\{\Xc,\Gamma_0H,\Gamma_1H\}$ is a boundary triplet for $\i\mathfrak{J}H$. From \cite[Theorem 5.2]{MalS07} and since $\mathfrak{A}H$ is a bounded perturbation of $\mathfrak{J}H$ and \cite[Lemma 2.6]{MalS06} we have that for $u\in C^2([0,\infty),\Xc)$ and $z_0\in X$ with $\Gamma_1H z_0=u(0)$ there is a unique classical solution $z\in C^1([0,\infty),X)\cap C([0,\infty),\dom( \mathfrak{A}H))$ 
satisfying \eqref{eq:A_bctrl} and $y(t):=\Gamma_1H z(t)$ is continuous.  Invoking the abstract Green identity
\[
\langle i\mathfrak{J}x_1,x_2\rangle-\langle x_1,i\mathfrak{J}x_2\rangle=\langle \Gamma_1x_1,\Gamma_0x_2\rangle-\langle \Gamma_0x,\Gamma_1y\rangle,\quad x_1,x_2\in\dom\mathfrak{J}.
\]

The following dissipation inequality holds for all $t\geq 0$
\begin{align*}
%\label{eq:PBE}
\re \langle\mathfrak{A}H z(t),H z(t)\rangle &=\re \langle\Gamma_1H z(t),\Gamma_0H z(t)\rangle- \left\langle\begin{bmatrix}G&0\\0&R
\end{bmatrix}H z(t),H z(t)\right\rangle\\&=\re \langle u(t),y(t)\rangle- \left\langle\begin{bmatrix}G&0\\0&R
\end{bmatrix}H z(t),H z(t)\right\rangle.
\end{align*}
The power balance equation \eqref{scnd_PBE} follows after replacing $\tfrac{d}{d t}z(t)=\mathfrak{A}H z(t)$ in \eqref{eq:PBE} and integrating this equation.
\end{proof}

\section{Structure preserving interconnection of pH subsystems}
\label{sec:interconnect}
In this section, we describe generalized Kirchhoff interconnection conditions which will preserve the considered system class. To this end, we show that these interconnections lead to skew-adjoint restrictions of the maximal operator $A$ as in (A1) leading to an operator satisfying (A3). However, here we will rather consider self-adjoint extensions of symmetric operators, to stay in the setting of boundary triplets.  By multiplying with the imaginary unit we transform symmetric in skew-symmetric operators and vice versa.

Let $S$ be a symmetric operator in a Hilbert space $X$ with boundary triplet $\{\Xc,\Gamma_0,\Gamma_1\}$ for $S^*$. Furthermore, let $\Theta \subseteq\Xc\times\Xc$ be a subspace, also called \emph{linear relation}, then extensions of $S$ can be parametrized as follows
\[
\dom S_{\Theta}:=\{x\in\dom S^* ~|~ (\Gamma_0x,\Gamma_1x)\in\Theta\}.
\]
One can show that properties of the relation $\Theta$ translate to the operator $S_{\Theta}$. To this end, recall that the \emph{adjoint} of a linear relation $\Theta$ in $\Xc\times\Xc$ is given by 
\[
\Theta^*:=\{(x,y)\in\Xc\times\Xc ~|~ \langle v,x\rangle=\langle u,y\rangle~\text{for all $(u,v)\in\Theta$}\}.
\]
Then one has 
\begin{align*}
%\label{eq:adjoint}
S_{\Theta}^*=S_{\Theta^*},
\end{align*}
see e.g.\ \cite[Theorem 2.1.3]{Behr20}, which implies that $\Theta$ is self-adjoint if and only if $S_{\Theta}$ is self-adjoint. 
Furthermore, Proposition \ref{prop:many_stable} (b) and Remark \ref{rem:stable} (c) together with the dissipativity of $\Theta$ and $\Theta^*$ imply that $S_{\Theta}$ generates a contraction semigroup.

We will use the above-described way to obtain self-adjoint extensions for our class of pH operators on the network.  Here we consider now several pH subsystems which are given by the following closed densely defined symmetric operators $S_j$, $j=1,\ldots,m$ in the Hilbert spaces $X_j\times X_j$ given by
\begin{align}
\label{def_Sj}
S_{j}:=\begin{bmatrix}
0&\i D_{j}\\\i D_{j}&0
\end{bmatrix},
\end{align}
and assume for the coupling the following stronger assumption than (A5).
\begin{itemize}
\item[(A5')] There exists a boundary triplet $\{\Xc,\Gamma_0^j,\Gamma_1^j\}$ for $S_j^*$ of the following form $\Xc=\hat \Xc\times\hat\Xc$ for some Hilbert space $\hat \Xc$ and the abstract boundary mappings are given by 
\begin{align*}
\Gamma_0^j:\dom S_j^*\rightarrow\Xc,\quad x_j=(x_0^j,x_1^j)\mapsto\begin{bmatrix}
\Gamma_{0,0}^jx_0^j\\ \Gamma_{0,1}^jx_0^j
\end{bmatrix},\\
\Gamma_1^j:\dom S_j^*\rightarrow\Xc,\quad x_j=(x_0^j,x_1^j)\mapsto\begin{bmatrix} \Gamma_{1,0}^jx_1^j\\ - \Gamma_{1,1}^jx_1^j\end{bmatrix}
\end{align*}
using the component mappings $\Gamma_{k,l}^j:\dom D_j^*\rightarrow\hat \Xc,\quad k,l=0,1$.
\end{itemize}

We show in Section \ref{sec:TL} that for electrical transmission lines, possible choices of $\Gamma_{0,0}^jx_0$ and $\Gamma_{0,1}^jx_0$ are the voltages at the left and right endpoint of the line, respectively, and by choosing $\Gamma_{1,0}^jx_1$ and $\Gamma_{1,1}^jx_1$ as the currents at the endpoints of the line. 

Next, we introduce the direct sum operator 
\begin{align*}
%\label{def:S_opl}
S_{\oplus}:=\begin{bmatrix}0&\i \bigoplus_{j=1}^mD_{j}\\\i \bigoplus_{j=1}^mD_{j}&0\end{bmatrix}
\end{align*}
which is closed and densely defined and symmetric in the direct sum space $X_{\oplus}\times X_\oplus$ where $X_\oplus:=\oplus_{j=1}^m X_j$. Then the direct sum triplet 
\[
\Xc_{\oplus}:=\bigoplus_{j=1}^m\Xc,\quad \Gamma_0^{\oplus}(x_j)_{j=1}^m:=(\Gamma_0^jx_j)_{j=1}^m,\quad \Gamma_1^{\oplus}(x_j)_{j=1}^m:=(\Gamma_1^jx_j)_{j=1}^m,\quad 
\]
is a boundary triplet for $S_{\oplus}^*$ because the surjectivity and abstract Green identity  carries over from these properties of the summand triplets.

Next, we couple the edge operators based on a given graph $\Gc=(V,E)$ where $V$ is a finite set of vertices and $E$ is a finite set of edges given by the intervals $e_j=[0,1]$. The idea is that for the $j$th edge the evaluations $\Gamma_{0,0}^jx_0^j$ and $\Gamma_{0,1}^jx_0^j$ will be identified with the endpoints which correspond to two distinct vertices $v,w\in V$. This will be denoted by $(j,0)\sim v$ or $(j,1)\sim w$. Furthermore,  the notion $e_j\sim v$ means that $e_j$ has $v$ as one of its vertices. Another notion that will be used is $\sgn(e_j,v)\in\{-1,1\}$ for an edge $e_j\in E$ and a vertex $v$ which is $\sgn(e,v)=1$ if $(j,0)\sim v$ and $-1$ if $(j,1)\sim v$.

For a given graph $\Gc=(V,E)$, we want to consider vertex based generalized Kirchhoff coupling conditions and show that they lead to self-adjoint extensions of $S_{\oplus}$. Here we consider at every node $v\in V$
\begin{align}
\label{eq:cont}
\Gamma_{0,j_2}^{j_1}x_0^{j_1}=\Gamma_{0,k_2}^{k_1} x_0^{k_1}\quad  \text{for all  $j_1,k_1=1,\ldots,m$,~ $j_2,k_2\in\{0,1\}$ with $(j_1,j_2)\sim v$ and $(k_1,k_2)\sim v$}
\end{align}
and for the common value at the vertices, we will write for all $v\in V$
\[
(\Gamma_0^{\oplus}x)(v):=\begin{cases}\Gamma_{0,0}^jx_0^j & \text{if $(j,0)\sim v$},\\  \Gamma_{0,1}^jx_0^j & \text{if $(j,1)\sim v$.} \end{cases} 
\]

In the case of transmission lines, this can be interpreted as equaling the line voltages at the interconnection points and resembles the method of modified nodal analysis for lumped parameter models of electrical circuits \cite{HoRB75}. This leads to the following node-based extension of $S_{\oplus}$
\[
\dom S_{\oplus,cnt}=\{x\in \dom S_{\oplus}^*~|~ \text{$x$ fulfills \eqref{eq:cont}}\}
\]
and 
\[
\dom D_{node}=\{x_0\in \dom \oplus_{j=1}^m(D_{j})^*~|~ \text{$x_0$ fulfills \eqref{eq:cont}}\}.
\]

Below we formulate the first main result of this section, a similar result was obtained in \cite[Theorem 4.2]{GernTrun21} which was stated only for finite dimensional $\Xc$ but can easily be extended to the  general case.
\begin{proposition}
\label{prop:node}
Let $S_1,\ldots,S_m$ be closed densely-defined symmetric operators given by \eqref{def_Sj}. Then for the direct sum operator $S_{\oplus}$  the following holds:
\begin{itemize}
\item[\rm (a)]
The triplet given by $\left\{\hat \Xc^{|V|},\Gamma_0^V, \Gamma_1^V\right\}$ with $\Gamma_0^V, \Gamma_1^V:\dom S_{cnt,\oplus}\rightarrow \hat \Xc^{|V|}$ and
\begin{align*}
%\label{eq:graph_bt}
\Gamma_0^Vx:=\{(\Gamma_0^{\oplus}x)(v_j)\}_{j=1}^{|V|},\quad \Gamma_1^Vx:=\big\{\sum_{e_l\sim v_j, (l,k)\sim v_j}{\rm sgn}(e_l,v_j) \Gamma_{1,k}^l x_1^l\big\}_{j=1}^n, 
\end{align*}
is a boundary triplet for the operator $S_{cnt,\oplus}$.
\item[\rm (b)] The operator given by
\[
\dom S_{node}:=\{x\in \dom S_{\oplus}^*~|~\text{$x$ fulfills \eqref{eq:cont} and $\Gamma_1^Vx=0$}\},\quad S_{node}x:=S_{\oplus}^*x,
\]
is a self-adjoint extension of $S_{\oplus}$  and given by
\[
S_{node}=\begin{bmatrix}0&iD_{node}^*\\iD_{node}&0
\end{bmatrix}.
\]
The operator given by
\[
\dom S_{0}:=\{x\in \dom S_{\oplus}^*~|~\text{$x$ fulfills \eqref{eq:cont} and $\Gamma_0^Vx=0$}\},\quad S_0x:=S_{\oplus}^*x,
\]
is a self-adjoint extension of $S_{\oplus}$  and given by
\[
S_{0}=\begin{bmatrix}0&\i D_{0}^*\\\i D_{0}&0
\end{bmatrix},
\]
where $D_0$ is an extension of $\oplus_{j=1}^mD_j$ with domain 
\[
\dom D_0:=\{ x_0\in \dom\oplus_{j=1}^mD_j~|~\text{$\Gamma_{0,j}^{k}x_0^k=0$ for all $j=0,1, k=1,\ldots,m$}  \}.
\]

\item[\rm (c)] Let $L=\diag(L_v)\in \eL (\hat \Xc^{|V|})$, $L_v\in \eL(\hat \Xc)$, then the operator given by 
\[
\dom S_{\oplus,L}:=\{x\in \dom S_{\oplus}^*~|~\text{$x$ fulfills \eqref{eq:cont} and $\Gamma_1^Vx=L\Gamma_0^Vx$}\}, \quad S_{\oplus,L}x:=S_{\oplus}^*x
\]
is an extensions of $S_{\oplus}$ which satisfies
\[
\im (S_{\oplus,L}x,x)=\im (L\Gamma_0^Vx,\Gamma_0^Vx),\quad \text{for all $x\in\dom S_{\oplus,L}$}.
\]
If $-\i L$ and $\i L^*$ are dissipative then $S_{\oplus,L}$ generates a contraction semigroup.
\end{itemize}
\end{proposition}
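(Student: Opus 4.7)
The strategy is to leverage the already-established direct-sum boundary triplet $\{\Xc_{\oplus}, \Gamma_0^{\oplus}, \Gamma_1^{\oplus}\}$ and reorganize the boundary data per vertex, exploiting the continuity condition \eqref{eq:cont} to collapse edge-endpoint values into single vertex values. All three parts then follow from this vertex-wise reorganization combined with standard boundary-triplet machinery as in \cite{Behr20}.

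For part (a), I would first verify the abstract Green identity. Starting from the known per-edge identity
\[
\langle S_{\oplus}^* x, y\rangle - \langle x, S_{\oplus}^* y\rangle = \sum_{j=1}^m \bigl(\langle \Gamma_1^j x_j, \Gamma_0^j y_j\rangle_{\Xc} - \langle \Gamma_0^j x_j, \Gamma_1^j y_j\rangle_{\Xc}\bigr),
\]
I would regroup the inner products by vertex. On elements satisfying \eqref{eq:cont}, for each vertex $v$ and each edge-end $(j,k)\sim v$ the value $\Gamma_{0,k}^j x_0^j$ equals the common vertex value $(\Gamma_0^{\oplus} x)(v)$, so the sign factors built into the block shape of $\Gamma_1^j$ combine with $\sgn(e_j,v)$ into the signed sum defining $\Gamma_1^V$, producing exactly $\langle \Gamma_1^V x, \Gamma_0^V y\rangle - \langle \Gamma_0^V x, \Gamma_1^V y\rangle$. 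For surjectivity of $[\Gamma_0^V, \Gamma_1^V]$, given targets $(a,b)\in \hat\Xc^{|V|}\times\hat\Xc^{|V|}$ I would pick any distribution of each $b_v$ into per-edge contributions whose signed sum at $v$ equals $b_v$, and then apply surjectivity of each single-edge $[\Gamma_0^j,\Gamma_1^j]$ to construct $x_j$ componentwise; assigning the common vertex value $a_v$ to every $\Gamma_{0,k}^j x_0^j$ with $(j,k)\sim v$ enforces \eqref{eq:cont} automatically.

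For part (b), once the boundary triplet from (a) is in hand, it is standard boundary-triplet theory that $\ker \Gamma_0^V$ and $\ker \Gamma_1^V$ define self-adjoint extensions of the underlying symmetric operator. Identifying the resulting operators with the claimed block structure is then a matter of translating the boundary conditions back into the $D_j$ language: for $S_{node}$, the condition $\Gamma_1^V x = 0$ imposes vanishing signed sums of ``currents'' at every vertex while leaving the ``voltages'' free, which is exactly the domain condition defining $D_{node}$; for $S_0$, $\Gamma_0^V x = 0$ kills every endpoint value of $x_0^j$, yielding $\dom D_0$.

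For part (c), the imaginary part identity is a direct application of the Green identity from (a) with $y=x$:
\[
\langle S_{\oplus,L} x, x\rangle - \langle x, S_{\oplus,L} x\rangle = \langle \Gamma_1^V x, \Gamma_0^V x\rangle - \langle \Gamma_0^V x, \Gamma_1^V x\rangle = \langle L\Gamma_0^V x, \Gamma_0^V x\rangle - \langle \Gamma_0^V x, L\Gamma_0^V x\rangle,
\]
which after division by $2\i$ yields $\im\langle S_{\oplus,L}x,x\rangle = \im\langle L\Gamma_0^V x, \Gamma_0^V x\rangle$. For the contraction-semigroup conclusion, I would pass to the $\i$-multiplication convention linking symmetric-type and skew-symmetric-type operators as announced in the opening of the section; the assumption that $-\i L$ and $\i L^*$ are dissipative then translates, via the identity just derived, into dissipativity of both $-\i S_{\oplus,L}$ and its adjoint, so that the Lumer--Phillips criterion in Proposition~\ref{prop:many_stable}(b) together with Remark~\ref{rem:stable}(c) delivers the contraction semigroup. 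The main technical obstacle I anticipate is the careful sign bookkeeping in the vertex-regrouping step of (a): matching the orientation convention captured by $\sgn(e_j,v)$ with the signs hidden in the block structure of $\Gamma_1^j$ is exactly what makes the rearranged boundary pairings collapse into a clean per-vertex Green identity, and getting this bookkeeping right is the only nontrivial ingredient; once (a) is settled, parts (b) and (c) are essentially formal consequences.
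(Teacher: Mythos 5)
Your proposal is correct, and for parts (a) and (c) it follows essentially the same route as the paper: the Green identity is obtained by summing the per-edge identities and regrouping by vertex using \eqref{eq:cont}, surjectivity is inherited from the single-edge triplets, and the identity in (c) is the Green identity with $y=x$ combined with $\Gamma_1^Vx=L\Gamma_0^Vx$, after which Proposition~\ref{prop:many_stable}(b) and Remark~\ref{rem:stable}(c) give the contraction semigroup. Where you genuinely diverge is part (b). You deduce self-adjointness of $S_{node}$ and $S_0$ from the vertex triplet built in (a), invoking the standard fact that the restrictions to $\ker\Gamma_0$ and $\ker\Gamma_1$ of a boundary triplet are self-adjoint extensions. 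The paper does not use the triplet from (a) for this at all: it encodes the coupling as a linear relation $\Theta=\Xc_{cnt}\times\Xc_{cnt}^{\perp}$ with respect to the original direct-sum triplet $\{\Xc_{\oplus},\Gamma_0^{\oplus},\Gamma_1^{\oplus}\}$, verifies $\Theta^{*}=J\Theta^{\perp}=\Theta$ via the flip-flop operator $J$, concludes $S_{\Theta}^{*}=S_{\Theta^{*}}=S_{\Theta}$, and only afterwards translates $(\Gamma_0^{\oplus}x,\Gamma_1^{\oplus}x)\in\Theta$ into \eqref{eq:cont} together with $\Gamma_1^Vx=0$ (writing $\Xc_{cnt}=\ran M$, $\Xc_{cnt}^{\perp}=\ker M^{*}$). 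Your route is shorter once (a) is in hand, but it tacitly uses that $S_{cnt,\oplus}$ is the adjoint of a closed densely defined symmetric operator, which is what licenses the ``standard'' self-adjointness statement; the paper's relation-based argument sidesteps this by staying inside the boundary triplet for $S_{\oplus}^{*}$, at the cost of the explicit computation of $\Theta^{\perp}$. Similarly, in (c) the paper realizes $S_{\oplus,L}$ as $S_{\Theta_L}$ for a relation $\Theta_L$ that distributes $L_v$ over the edge ends with weights $\tfrac{1}{\deg v}$, whereas you argue directly from the vertex triplet; your explicit derivation of $\im\langle S_{\oplus,L}x,x\rangle=\im\langle L\Gamma_0^Vx,\Gamma_0^Vx\rangle$ is in fact more detailed than what the paper records. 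The identification of the block structures of $S_{node}$ and $S_0$ is treated at the same (brief) level of rigor in both arguments.
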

\begin{proof}
We have to show that $(\Gamma_0^V,\Gamma_1^V):\dom S_{\oplus,cnt}\rightarrow \hat \Xc^{|V|}\times \hat \Xc^{|V|}$ is surjective and that the abstract Green identity holds. The surjectivity follows from the surjectivity of the mappings $(\Gamma_0^l,\Gamma_1^l):\dom S_l^*\rightarrow\Xc\times\Xc$ given by \eqref{def_Sj}. The abstract Green identity has the following form
\begin{align*}
&~~~~\langle S^*(x_j)_{j=1}^m,(y_j)_{j=1}^m\rangle-\langle(x_j)_{j=1}^m,S^*(y_j)_{j=1}^m\rangle\\&=\sum_{j=1}^m\langle S_j^*x_j,y_j\rangle-\langle x_j,S_j^*y_j\rangle
\\
&=\sum_{j=1}^m\langle\Gamma_1^jx_j,\Gamma_0^jy_j\rangle-\langle\Gamma_0^jx_j,\Gamma_1^jy_j\rangle\\
&=\sum_{j=1}^m\left\langle\begin{bmatrix}\Gamma_{1,0}^jx_1^j\\- \Gamma_{1,1}^jx_1^j\end{bmatrix},\begin{bmatrix}\Gamma_{0,0}^jy_0^j\\\Gamma_{0,1}^jy_0^j\end{bmatrix}\right\rangle-\left\langle\begin{bmatrix}\Gamma_{0,0}^jx_0^j\\\Gamma_{0,1}^jx_0^j\end{bmatrix},\begin{bmatrix}\Gamma_{1,0}^jy_1^j\\-\Gamma_{1,1}^jy_2^j\end{bmatrix}\right\rangle\\
&=\sum_{v\in V}\langle\Gamma_1^Vx,\Gamma_0^Vy\rangle-\langle\Gamma_0^Vx,\Gamma_1^Vy\rangle,
\end{align*}
where we used in the last step that $x,y\in\dom S_{\oplus, cnt}$.

To prove (b), consider $x=(x_0,x_1)\in\dom S_{\oplus}^*$. Then $\Gamma_0^V$ acts only on $x_0$ whereas $\Gamma_1^V$ acts only on $x_1$. We show that the self-adjoint relation $\Theta\subseteq\Xc_{\oplus}^2$ which satisfies  $S_{node}=S_{\Theta}$ is given by 
\[
\Theta=\Xc_{cnt}\times \Xc_{cnt}^\perp, \quad \Xc_{cnt}:=\oplus_{v\in V}\Xc_v
%\Theta=(\Gc_{cnt}\times \{0\})\oplus (\{0\}\times \Gc_{cnt}^\perp).
\] 
where $\Xc_v$ is a closed subspace of $\Xc_{\oplus}=\hat\Xc^{2m}$ which will be defined in the following. To this end, we identify $\Xc_{\oplus}$ with the direct sum $\oplus_{(j,l)\in I}\hat \Xc$ over $I:=\{1,\ldots,m\}\times \{0,1\}$. Consequently, the elements of $\Xc_{\oplus}$ are be denoted by $\left(\begin{smallbmatrix}\hat x_0^j\\ \hat x_1^j\end{smallbmatrix}\right)_{j=1}^m$ and then $\Xc_v$ is given by 
\[
\Xc_v:=\left\{ \left(\begin{smallbmatrix}\hat x_0^j\\ \hat x_1^j\end{smallbmatrix}\right)_{j=1}^m\in\hat\Xc^{2m}~|~ \hat x^j_k=\hat y ~\text{for all $(j,k)\sim v$ and some $\hat y\in \hat\Xc$ and $\hat x^j_k=0$ otherwise}\right\}.
\]
Denoting by $J$ the flip-flop operator on $\Xc^{\oplus}\times \Xc^{\oplus}$ which maps $(x,y)\mapsto(-y,x)$, see \cite[Chapter 2]{Behr20}, we obtain
\[
\Theta^*=J\Theta^\perp=J(\Xc_{cnt}^{\perp}\times \Xc_{cnt})=\Theta.
\]
Because of the product structure of $\Theta$ the condition
\[
(\Gamma_0^\oplus x,\Gamma_1^\oplus x)=\left(\left(\begin{bmatrix}\Gamma_{0,0}^jx_0^j\\ \Gamma_{0,1}^jx_0^j\end{bmatrix}\right)_{j=1}^m,\left(\begin{bmatrix}\Gamma_{1,0}^jx_1^j\\ - \Gamma_{1,1}^jx_1^j\end{bmatrix}\right)_{j=1}^m\right)\in\Theta
\]
is equivalent to
\[
\Gamma_0^{\oplus}x\in\Xc_{cnt},\quad \Gamma_1^{\oplus}x\in\Xc_{cnt}^\perp.
\]
Hence $\Gamma_0^{\oplus}x\in\Xc_{cnt}$ is equivalent to $x$ satisfying \eqref{eq:cont}. If we write now 
\[
\Xc_{cnt}=\ran M,
\]
where $M$ is some bounded operator with closed range $\ran M$ which can be obtained from the decomposition  $\Xc_{cnt}=\oplus_{v\in V}\Xc_v$ and the definition of $\Xc_v$. Then we have
\[
\Xc_{cnt}^\perp=\ker M^*.
\]
Hence $\Gamma_1^{\oplus}x\in\Xc_{cnt}^\perp$ is equivalent to $\Gamma_1^Vx=0$. Furthermore, since the lower off-diagonal block is equal to $iD_{\oplus,cnt}$ and $S_{\oplus,node}$ is self-adjoint, the upper off diagonal operator must be equal to $(iD_{\oplus,cnt})^*$. To prove (c) we consider the relation 
\[
\Theta_L:=\{(x,\hat Lx+y)~|~ x\in\Xc_{cnt},~  y\in\Xc_{cnt}^\perp\},\quad (\hat Lx)_{(j,l)}:=\tfrac{1}{\deg v}L_vx_{(j,l)} \quad \text{if $v\sim(j,l)$},
\]
then we have $S_{\Theta_L}=S_{\oplus,L}$.
\end{proof}

In the context of electrical circuits there is not only a vertex based definition of the Kirchhoff laws but also a loop based definition, see e.g.\ \cite{GerHRS21,NedPS22,Rei14}. This construction will be used to define another class of extensions of $S_{\oplus}$. To this end, we assume that the underlying graph $\Gc=(V,E)$ is connected and consider a \emph{spanning tree}, i.e.\ a connected subgraph without cycles that contains all vertices of $\Gc$. Then for each edge $e\in E$ which is not contained in the spanning tree there is a unique cycle containing $e$ and having its remaining edges in the spanning tree. This cycle is called a \emph{fundamental cycle}. We add an orientation to each fundamental cycle which is taken as the orientation of the edge that is not part of the spanning tree. Using this orientation we define the following sign of edges $e\in E$ for fundamental cycles $\mathcal{C}$
\[
\sgn(e,\mathcal{C}):=\begin{cases}1&\text{if $e$ has the same orientation as $\mathcal{C}$,}\\-1&\text{if $e$ has a different orientation as $\mathcal{C}$,}\\ 0 & \text{if $e$ is not contained in $\mathcal{C}$.}\end{cases}
\]

For electrical circuits, the Kirchhoff voltage law implies that the sum of all cycle voltages with appropriate signs in each fundamental cycle is zero. Furthermore, similar to the vertex based formulation, we have a corresponding condition for the adjoint operator which means here that the currents at the nodes of every fundamental cycle $\mathcal{C}$ are constant and this constant value will be denoted by 
\begin{align}
\label{eq:const_loop}
\Gamma_1^{\oplus}x(\mathcal{C})=\Gamma_{1,k}^lx_2,\quad \text{for all $(k,l)\sim e\in\mathcal{C}$}.
\end{align}
These conditions define an operator
\[
\dom D_{loop}=\{x_2\in \dom \oplus_{j=1}^m(D_{j})^*~|~ \text{$x_2$ fulfills \eqref{eq:const_loop} for all fundamental cycles $\mathcal{C}$}\}
\]
and the following extension of $S_{\oplus}$
\[
\dom S_{cst,\oplus}:=\{x\in \dom S_{\oplus}^*~|~ \text{$x$ fulfills \eqref{eq:const_loop}}\}.
\]

\begin{proposition}
%\label{prop:loop}
Let $S_1,\ldots, S_m$ be closed densely-defined symmetric operators given by \eqref{def_Sj} and consider a graph $\Gc=(V,E)$ and set $C:=|E|-|V|+1$. Then the following holds:
\begin{itemize}
\item[\rm (a)]
The triplet given by $\left\{\hat \Xc^{C},\Gamma_0^E, \Gamma_1^E\right\}$ with
\begin{align*}
%\label{eq:graph_bt_edge}
\Gamma_0^Ex:=\{\Gamma_1^{\oplus}x_2(\mathcal{C}_j)\}_{j=1}^{C},\quad \Gamma_1^Ex:=\big\{\sum_{e_l\in\mathcal{C}_j, (k,l)\sim e_l}{\rm sgn}(e_l,\mathcal{C}_j)\Gamma_{k}^l x_1^l\big\}_{j=1}^C
\end{align*}
is a boundary triplet for the operator $S_{cst,\oplus}$. 
\item[\rm (b)] The operator given by 
\[
\dom S_{loop}:=\{x\in \dom S_{\oplus}^*~|~\text{$x$ fulfills \eqref{eq:const_loop} and $\Gamma_1^Ex=0$}\}
\]
is a self-adjoint extension of $S_{\oplus}$ and
\[
S_{loop}=\begin{bmatrix}0&\i D_{loop}\\\i D_{loop}^*&0
\end{bmatrix}.
\]
\item[\rm (c)] Let $L\in L(\Xc^{C})$ then the operator given by 
\[
\dom S_{\oplus,L}:=\{x\in \dom S_{\oplus}^*~|~\text{$x$ fulfills \eqref{eq:const_loop} and $\Gamma_1^Ex=L\Gamma_0^Ex$}\}
\]
is an extensions of $S_{\oplus}$ which satisfies
\[
\im (S_{\oplus,L}x,x)=\im (L\Gamma_0^Ex,\Gamma_0^Ex),\quad \text{for all $x\in\dom S_{\oplus,L}$}.
\]
\end{itemize}
\end{proposition}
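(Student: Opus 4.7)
This proposition is the loop-based analogue of Proposition~\ref{prop:node}, with fundamental cycles playing the role of vertices and the number of independent conditions being $C=|E|-|V|+1$, the first Betti number of $\Gc$. The duality of the two setups is as follows: whereas \eqref{eq:cont} constrains the first component $x_0$ (and $\Gamma_1^V x=0$ constrains $x_1$), the loop condition \eqref{eq:const_loop} constrains the second component $x_1$, while the Kirchhoff voltage law $\Gamma_1^E x=0$ constrains $x_0$. My plan is to mirror each step of the proof of Proposition~\ref{prop:node}, with this swap of roles.

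For part (a), I would first verify surjectivity of $(\Gamma_0^E,\Gamma_1^E):\dom S_{cst,\oplus}\to\hat\Xc^C\times\hat\Xc^C$. Given arbitrary $(a,b)\in \hat\Xc^C\times\hat\Xc^C$, the surjectivity of each edge triplet $(\Gamma_0^j,\Gamma_1^j)$ provided by (A5') lets me prescribe the common cycle-value $a_j$ of the relevant $\Gamma_{1,\cdot}^\cdot x_1$ on every $\mathcal{C}_j$, and then independently adjust the chord edges (those outside a fixed spanning tree, which are in bijection with fundamental cycles) to realize arbitrary signed sums $b_j$. For the abstract Green identity I would start from the edge-wise identity
\[
\langle S_\oplus^* x,y\rangle-\langle x,S_\oplus^* y\rangle=\sum_{j=1}^m\bigl(\langle\Gamma_1^j x_j,\Gamma_0^j y_j\rangle-\langle\Gamma_0^j x_j,\Gamma_1^j y_j\rangle\bigr),
\]
and use \eqref{eq:const_loop} applied to both $x$ and $y$ to factor out the cycle-constant values along each $\mathcal{C}_j$; the signs $\sgn(e_l,\mathcal{C}_j)$ enter through the orientation convention, and the edge sum collapses into the advertised cycle sum.

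For part (b), following the strategy of Proposition~\ref{prop:node}(b), I would identify the self-adjoint linear relation $\Theta\subseteq\Xc_\oplus\times\Xc_\oplus$ with $S_{loop}=S_\Theta$. Here $\Theta=\Xc_{cst}^\perp\times\Xc_{cst}$, where $\Xc_{cst}$ is the closed subspace of the codomain of $\Gamma_1^\oplus$ consisting of configurations that are constant along each fundamental cycle; note that the order of the factors is swapped relative to the node case, reflecting the duality noted above. Self-adjointness $\Theta^*=J\Theta^\perp=\Theta$ (with $J$ the flip-flop operator used in the proof of Proposition~\ref{prop:node}) gives $S_{loop}=S_{loop}^*$. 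The block form follows because the cycle-constant condition and the kernel condition $\Gamma_1^E x=0$ together characterize $x_1\in\dom D_{loop}$ and $x_0\in\dom D_{loop}^*$, so the upper off-diagonal block equals $\i D_{loop}$ and self-adjointness forces the lower block to be $\i D_{loop}^*$. Part (c) is then an immediate consequence: applying the Green identity from (a) with $x=y$ and substituting $\Gamma_1^E x=L\Gamma_0^E x$ yields $\im\langle S_{\oplus,L}x,x\rangle=\im\langle L\Gamma_0^E x,\Gamma_0^E x\rangle$.

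The main obstacle will be the combinatorial rearrangement in part (a): showing that once the cycle-constant condition is imposed, the edge sum in the Green identity collapses correctly into a cycle sum with precisely the signed incidence coefficients $\sgn(e_l,\mathcal{C}_j)$. This is where the spanning-tree decomposition of $\Gc$ is essential, since every chord edge lies in exactly one fundamental cycle while tree edges are grouped via the unique fundamental cycles passing through their endpoints. Once this combinatorial identity is recorded, the analytic content of (a)--(c) is a direct translation of the corresponding steps in Proposition~\ref{prop:node}.
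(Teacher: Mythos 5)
The paper gives no proof of this proposition at all; it is stated immediately after Proposition~\ref{prop:node} and is clearly meant to follow by the same argument with the roles of the two components $x_0$ and $x_1$ interchanged (vertices $\leftrightarrow$ fundamental cycles, continuity of $\Gamma_0$-traces $\leftrightarrow$ cycle-constancy of $\Gamma_1$-traces). Your plan is exactly that intended dualization, and parts (b) and (c) of your sketch — the relation $\Theta=\Xc_{cst}^\perp\times\Xc_{cst}$, self-adjointness via $J\Theta^\perp=\Theta$, and reading (c) off the Green identity with $x=y$ — are faithful mirrors of the published proof of Proposition~\ref{prop:node} and would go through at the same level of rigor.

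However, the one step you explicitly defer is precisely the step where the duality is \emph{not} automatic, and as written your argument for (a) has a gap there. In the node case the collapse of $\sum_j\langle\Gamma_1^jx_j,\Gamma_0^jy_j\rangle-\langle\Gamma_0^jx_j,\Gamma_1^jy_j\rangle$ into a vertex sum works because the edge-endpoint pairs $(j,k)$ are \emph{partitioned} by the vertices: each pair is incident to exactly one vertex. Fundamental cycles do not partition the edge-endpoints: a spanning-tree edge may lie in several fundamental cycles (in the paper's own Figure~\ref{fig:grid}, $e_5$ lies in both $C_1$ and $C_2$) and a bridge lies in none. This breaks both halves of your argument for (a). For surjectivity, "prescribe the common cycle-value $a_j$ on every $\mathcal{C}_j$" is inconsistent whenever two cycles share a tree edge, since \eqref{eq:const_loop} then forces the two constants to agree, so $\Gamma_0^E$ need not be onto $\hat\Xc^C$; adjusting chord edges only fixes the $b_j$, not this conflict among the $a_j$. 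For the Green identity, terms coming from bridges do not appear in any cycle sum, and terms from shared tree edges would be counted once in the edge sum but would need to appear in several cycle sums. To close the gap you must either restrict to graphs in which every edge lies in exactly one fundamental cycle, or reinterpret \eqref{eq:const_loop} in the mesh-current sense (each edge current is the signed \emph{superposition} of the mesh currents of the cycles through it) and redo the bookkeeping accordingly; the statement as literally written does not hold for the paper's own example network without one of these repairs.
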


\section{Application to distributed transmission line networks}
\label{sec:TL}
In this section, we apply our results to study the stability and passivity of electrical networks having distributed transmission lines. In the literature, the transmission lines are often modeled in form of $\pi$-equivalent models which provide accurate lumped models. However, distributed line models are necessary to accurately model higher frequency behavior, longer transmission lines or transmission lines with impurities.

To study a line of length $\ell>0$ we consider a spatial interval $[0,\ell]\subset\R$. Then the voltage and current in a single line are given as solutions to the telegraph equations 
\begin{align}
C(\xi)\tfrac{\partial U}{\partial t}(t,\xi)&=-\tfrac{\partial I}{\partial \xi}(t,\xi)-G(\xi)U(t,\xi),\nonumber\\
L(\xi)\tfrac{\partial I}{\partial t}(t,\xi)&=-\tfrac{\partial U}{\partial \xi}(t,\xi)-R(\xi)I(t,\xi), \label{eq:tele}\\
&U(0,\xi)=U^0(\xi),\quad I(0,\xi)=I^0(\xi),\quad t\geq 0\nonumber, 
\end{align}
where $I:[0,\infty)\times [0,\ell]\rightarrow\R^d$ is the current and $U:[0,\infty)\times [0,\ell]\rightarrow\R^d$ is the voltage across the transmission line. We allow values in $\R^d$ for $d>1$ to include three-phase models of transmission lines as well. Furthermore, the underlying Hilbert space is $X=L^2([0,\ell],\R^d)$ and we assume that the matrix-valued functions $C,G,L,R:[0,\ell]\rightarrow \C^{d\times d}$ standing for the capacitance, conductance, inductance and resistance have values in the positive semi-definite matrices and fulfill $\langle C(\cdot)x,x\rangle\geq c\|x\|^2$ and $\langle L(\cdot)x,x\rangle\geq l\|x\|^2$ with $c,l>0$, $\langle R(\cdot)x,x\rangle\geq 0$ and $ \langle G(\cdot)x,x\rangle \geq 0$ for all $x\in X$ and $R(\cdot),G(\cdot)\in L^{\infty}([0,\ell],\C^{d\times d})$.

\subsection*{Stability and passivity}
The telegraph equations \eqref{eq:tele} can be written as an abstract Cauchy problem in the following way
\[
\tfrac{d}{dt}z(t)=\mathfrak{A}Hz(t),\quad Hz(0)=(U^0,I^0)
\]
with $X\times X=L^2([0,\ell],\R^d)\times L^2([0,\ell],\R^d)$ and 
\[
\mathfrak{A}=\begin{bmatrix}
-G&D^*\\D^*&-R
\end{bmatrix},\quad \mathcal{H}(\xi)=\begin{bmatrix}C(\xi)^{-1}&0\\0&L(\xi)^{-1}\end{bmatrix},\quad z(t,\xi)=\begin{bmatrix}Q(t,\xi)\\ \phi(t,\xi)\end{bmatrix},\quad \mathcal{H}(\xi)z(t,\xi)=\begin{bmatrix}U(t,\xi)\\ I(t,\xi)\end{bmatrix},
\]
where $Q$ is the charge and $\phi$ is the flux and the skew-symmetric operator $D$ is given by
\begin{align}
\label{def_D_JZ}
Dx=\tfrac{\partial}{\partial\xi}x,\quad \dom D:=\{x\in L^2([0,\ell],\C^d) ~|~  x\in H^1([0,\ell],\C^d),~ x(0)=x(\ell)=0\}.
\end{align}

Below we show that for the transmission line, there exists a boundary triplet of the desired form. 
\begin{lemma}
\label{lem:bt_TL}
Let $D$ be given by \eqref{def_D_JZ} and $X_1=X_2=L^2([0,\ell],\C^d)$, and let $H=\begin{smallbmatrix}
H_1&0\\0&H_2
\end{smallbmatrix}$ with $H_i\in \eL(X_i)$, $i=1,2$ be uniformly positive. Then $S=\begin{smallbmatrix}
0&\i D\\\i D&0
\end{smallbmatrix}H$ is symmetric  in $X$ with respect to the scalar product given by $\langle H \cdot,\cdot\rangle$ and  $S^*$ has the boundary triplet 
\[
\left\{\C^{2d},(x_1,x_2)\mapsto\begin{bmatrix}
H_1x_1(0)\\ H_1x_1(\ell)
\end{bmatrix},(x_1,x_2)\mapsto\begin{bmatrix}
-\i H_2x_2(0)\\ \i H_2x_2(\ell)
\end{bmatrix}\right\}.
\]
\end{lemma}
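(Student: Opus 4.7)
The plan is to peel off the Hamiltonian $H$ using the preceding Lemma~\ref{lem:with_H} and reduce everything to the unweighted operator $T:=\begin{smallbmatrix}0& \i D\\ \i D& 0\end{smallbmatrix}$ on $X_1\times X_2$, for which the candidate boundary triplet is the ``trace'' triplet
\[
\tilde\Gamma_0(x_1,x_2)=\begin{bmatrix}x_1(0)\\ x_1(\ell)\end{bmatrix},\qquad \tilde\Gamma_1(x_1,x_2)=\begin{bmatrix}-\i x_2(0)\\ \i x_2(\ell)\end{bmatrix}.
\]
Granting that $\{\C^{2d},\tilde\Gamma_0,\tilde\Gamma_1\}$ is a boundary triplet for $T^*$, Lemma~\ref{lem:with_H} applied with $S=T$ immediately yields that $S=TH$ is symmetric w.r.t.\ $\langle H\cdot,\cdot\rangle$ and that $\{\C^{2d},\tilde\Gamma_0 H,\tilde\Gamma_1 H\}$ is a boundary triplet for $S^*=(TH)^*$, which is exactly the claimed triplet after writing out $\tilde\Gamma_j H(x_1,x_2)=\tilde\Gamma_j(H_1x_1,H_2x_2)$.

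First I would verify that $T$ is closed, densely defined and symmetric: since $D=D_{\min}$ satisfies $D^*=-D_{\max}$, the operator $\i D$ is symmetric (the factor $\i$ turns skew-symmetric into symmetric) and the block form $T=\begin{smallbmatrix}0& \i D\\ \i D& 0\end{smallbmatrix}$ inherits this. A direct computation gives $\dom T^*=H^1([0,\ell],\C^d)\times H^1([0,\ell],\C^d)$ and $T^*(y_1,y_2)=(\i D_{\max}y_2,\i D_{\max}y_1)$. For the abstract Green identity, I would take $f=(f_1,f_2),g=(g_1,g_2)\in\dom T^*$ and compute
\[
\langle T^*f,g\rangle-\langle f,T^*g\rangle=\i\bigl(\langle D_{\max}f_2,g_1\rangle+\langle f_2,D_{\max}g_1\rangle\bigr)+\i\bigl(\langle D_{\max}f_1,g_2\rangle+\langle f_1,D_{\max}g_2\rangle\bigr),
\]
and integrate by parts on $[0,\ell]$; each bracket produces the boundary term $\overline{g_k(\ell)}^\top f_j(\ell)-\overline{g_k(0)}^\top f_j(0)$, and rearranging yields exactly $\langle\tilde\Gamma_1 f,\tilde\Gamma_0 g\rangle-\langle\tilde\Gamma_0 f,\tilde\Gamma_1 g\rangle$ with the signs as stated. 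Surjectivity of $(\tilde\Gamma_0,\tilde\Gamma_1):\dom T^*\rightarrow\C^{2d}\times\C^{2d}$ is then immediate: any prescribed quadruple of boundary values in $\C^{4d}$ can be interpolated by, e.g., affine functions in $H^1([0,\ell],\C^d)$.

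The only real care is bookkeeping of the $\i$'s and of the conjugation in $\langle\cdot,\cdot\rangle_{L^2}$ (which is antilinear in the second slot); a single sign error in the Green identity would force a different placement of $\pm\i$ in $\tilde\Gamma_1$, so I would double check the four boundary terms against the required pairing. Once the unweighted triplet is established, no further work is needed: the lifting statement in Lemma~\ref{lem:with_H} is purely formal, and symmetry of $S=TH$ with respect to $\langle H\cdot,\cdot\rangle$ is part of its conclusion, delivering both assertions of the lemma simultaneously.
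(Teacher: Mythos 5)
Your proof is correct and follows essentially the same route as the paper: the core of both arguments is the integration-by-parts verification of the abstract Green identity for the trace maps $(x_1,x_2)\mapsto(x_1(0),x_1(\ell))$ and $(x_1,x_2)\mapsto(-\i x_2(0),\i x_2(\ell))$, with surjectivity obtained by interpolating arbitrary boundary values. The only organizational difference is that you peel off $H$ first via Lemma~\ref{lem:with_H} and verify the unweighted triplet, whereas the paper carries $H_1x_1$, $H_2x_2$ through the weighted computation directly; the two amount to the same calculation.
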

\begin{proof}
We compute with the weighted scalar product $\langle H\cdot, \cdot\rangle$ 
\begin{align*}
\left\langle \begin{bmatrix}
0&\i D\\\i D&0
\end{bmatrix} \begin{bmatrix}H_1x_1\\ H_2x_2\end{bmatrix}, \begin{bmatrix}H_1y_1\\ H_2y_2\end{bmatrix}\right\rangle&=\langle \i DH_2x_2,H_1y_1\rangle+\langle \i DH_1x_1,H_2y_2\rangle\\
&=\langle \i H_2x_2(\ell),H_1y_1(\ell)\rangle-\langle \i H_2x_2(0),H_1y_1(0)\rangle\\
&-\langle \i H_2x_2,DH_1y_1\rangle -\langle \i H_1x_1,DH_2y_2\rangle \\
&+\langle \i H_1x_1(\ell),H_2y_2(\ell)\rangle-\langle \i H_1x_1(0),H_2y_2(0)\rangle\
\end{align*}
and 
\[
\left\langle \begin{bmatrix}H_1x_1\\ H_2x_2\end{bmatrix}, \begin{bmatrix}
0&\i D\\\i D&0
\end{bmatrix} \begin{bmatrix}H_1y_1\\ H_2y_2\end{bmatrix}\right\rangle=\langle H_1x_1,\i DH_2y_2\rangle+\langle H_2x_2,\i DH_1y_1\rangle.
\]

On the other hand it holds
\begin{align*}
&~~~\left\langle \begin{bmatrix} -\i H_2x_2(0)\\ \i H_2x_2(\ell)\end{bmatrix}, \begin{bmatrix} H_1y_1(0)\\ H_1y_1(\ell)\end{bmatrix}\right\rangle-\left\langle \begin{bmatrix} H_1x_1(0)\\ H_1x_1(\ell)\end{bmatrix}, \begin{bmatrix} -\i H_2y_2(0)\\ \i H_2y_2(\ell)\end{bmatrix} \right\rangle\\
&=-\langle \i H_2x_2(0) H_1y_1(0)\rangle+\langle H_1x_1(0),\i H_2y_2(0)\rangle+\langle \i H_2x_2(\ell),x_1y_1(\ell)\rangle-\langle H_1x_1(\ell),\i H_2y_2(\ell)\rangle.
\end{align*}
which shows the Green identity. The surjectivity is clear since each of the mappings is surjective and the ranges use independent variables. 
\end{proof}

There are now several choices for boundary triplets, each resulting in a different boundary control system. Two possible choices are 
\begin{align*}
\Gamma_0(x):=\begin{bmatrix}
U(0)\\ U(\ell)
\end{bmatrix},\quad \Gamma_1(x):=\begin{bmatrix}
-\i I(0)\\ \i I(\ell)
\end{bmatrix},\\
\hat \Gamma_0(x):=\begin{bmatrix}
 I(0)\\ I(\ell)
\end{bmatrix},\quad \hat \Gamma_1(x):=\begin{bmatrix}
-\i U(0)\\\i U(\ell)
\end{bmatrix}.
\end{align*}

This implies that the telegraph equations \eqref{eq:tele} with either current or voltage boundary inputs is a boundary control system of the form 
\[
\dot z(t)=\mathfrak{A}Hz(t),\quad u(t)=\Gamma_0z(t),\quad y(t)=\Gamma_1 y(t)
\]
and a power-balance equation similar to \eqref{eq:PBE} holds for control inputs $u\in C^2([0,T],\C^2)$. 

Proposition \ref{prop:many_stable} implies that the underlying semigroups are contraction semigroups. However, they are not strongly stable in general. The block operator $A$ associated with \eqref{eq:tele} for $G=0$ and the boundary conditions $I(0)=I(\ell)=0$ is not injective. If $R=0$ then the boundary conditions $U(0)=U(\ell)=0$ would imply a non-trivial kernel for $A$.

Consider now a network of transmission lines described by a graph $\Gc=(V,E)$. Then for each line $e\in E$ with spatial domain $[0,\ell_e]$ for some $\ell_e>0$, we consider in $X_e\times X_e=L^2([0,\ell_e],\R^d)^2$ the following system
\[
\mathfrak{A}_e=\begin{bmatrix}
-G_e&-D_e^*\\-D_e^*&-R_e
\end{bmatrix},\quad (H_ex_e)(\xi):=\mathcal{H}_e(\xi)x_e(\xi)=\begin{bmatrix}C_e(\xi)^{-1}&0\\0&L_e(\xi)^{-1}\end{bmatrix}\begin{bmatrix}
Q_e(\xi)\\ \phi_e(\xi)
\end{bmatrix}=\begin{bmatrix}U_e(\xi)\\ I_e(\xi)\end{bmatrix}
\]
and the skew-symmetric operator
\[
D_ex:=\tfrac{d}{d\xi}x ,\quad \dom D_e:=\{x_e\in H^1([0,\ell_e]) ~|~x_e(0)=x_e(\ell_e)=0\}. 
\]
The following boundary triplet is given by Lemma~\ref{lem:bt_TL}
\[
\Gamma_0^e(x_e):=\begin{bmatrix}
U_e(0)\\ U_e(\ell_e)
\end{bmatrix},\quad \Gamma_1^e(x_e):=\begin{bmatrix}
-\i I_e(0)\\ \i I_e(\ell_e)
\end{bmatrix}.
\]
This triplet fulfills for the operators $S_jH$ assumption (A5'), which was used in Section \ref{sec:interconnect} for Kirchhoff type interconnections. In particular, the boundary mappings are given by 
\begin{align*}
x_j=(x_0^j,x_1^j)\mapsto\begin{bmatrix}
\Gamma_{0,0}^eHx_0^e\\ \Gamma_{0,1}^eHx_0^e
\end{bmatrix}=\begin{bmatrix}
U_e(0)\\ U_e(\ell_e)
\end{bmatrix},\\ 
x_j=(x_0^j,x_1^j)\mapsto\begin{bmatrix} \Gamma_{1,0}^eHx_1^e\\ - \Gamma_{1,1}^eHx_1^j\end{bmatrix}=\begin{bmatrix}
-\i I_e(0)\\ \i I_e(\ell_e)
\end{bmatrix}.
\end{align*}

If we consider now the operators based on establishing continuity of $\Gamma_0^e$ at every vertex $v\in V$, then this leads to the node-type boundary triplet 
\[
\Gamma_0^V((x_e)_{e\in E})=(U(v))_{v\in V},\quad \Gamma_1^V((x_e)_{e\in E}):=\big\{-\i \sum_{e\sim v, (e,k)\sim v}{\rm sgn}(e,v)I_e(k\ell_e)\big\}_{v\in V},
\]
where the sum is taken over all edges $e$ which are adjacent to $v$ and $k=0,1$. Hence the voltage controlled network of transmission lines can be written as the following boundary control system 
\begin{align}
\label{eq:TL_1}
\dot z(t)=\mathfrak{A}Hz(t),\quad  u(t)=\Gamma_0^Vz(t),\quad y(t)=-\i\Gamma_1^Vz(t)
\end{align}
where 
\begin{align}
\label{eq:TL_2}
z=(z_e)_{e\in E},\quad \mathfrak{A}:=\begin{bmatrix}
-\bigoplus_{e\in E}G_e&\bigoplus_{e\in E}D_e^*\\\bigoplus_{e\in E}D_e^*&-\bigoplus_{e\in E}R_e
\end{bmatrix},\quad H:=\begin{bmatrix}\bigoplus_{e\in E}
C_e(\cdot)^{-1}&0\\0&\bigoplus_{e\in E}L_e(\cdot)^{-1}
\end{bmatrix}.
\end{align}
This can be viewed as having a voltage input at every node. Furthermore, the operator in (A3) is given by 
\[
A=\mathfrak{A}H|_{\ker \Gamma_0^V}=\begin{bmatrix}
-G&-D^*\\D&-R
\end{bmatrix}
\]
and has the property that $D$ is  the direct sum of derivatives with domain 
\[
\dom D=\{(U_e)_{e\in E}\in \oplus_{e\in E} H^1([0,\ell_e]),U_e(v)=0 \},\quad D(U_e)_{e\in E}=(\tfrac{d}{d\xi}U_e)_{e\in E}.
\]
Therefore, $D$ is injective and both operators $(I_X+D^*D)^{-1}$ and  $(I_X+DD^*)^{-1}$ are compact by the Sobolev embedding theorem. Hence, by Proposition~\ref{prop:compact},  $\mathfrak{A}H|_{\ker \Gamma_0^V}$ generates an exponentially stable semigroup. 
Furthermore, by Proposition~\ref{prop:pbe} the following power balance equation holds for all $u\in C^2([0,\infty),\C^{|V|})$
\begin{align*}
\sum_{e\in E}\re \langle\mathfrak{A}_eH_e z_e(t),H_e z_e(t)\rangle&=\re \langle u(t),y(t)\rangle-\sum_{e\in E}\left\langle\begin{bmatrix}-G_e&0\\0&-R_e
\end{bmatrix}H_e z_e(t),H_e z_e(t)\right\rangle,\\
\sum_{e\in E}(\langle H_ez_e(t),z_e(t)\rangle-\langle H_ez_e(0),z_e(0)\rangle)&\leq 2\int_0^t\re \langle u(\tau),y(\tau)\rangle d\tau-\sum_{e\in E}\left\langle\begin{bmatrix}G_e&0\\0&R_e
\end{bmatrix}H_e z_e(t),H_e z_e(t)\right\rangle.
\end{align*}

To study current controlled systems, we interchange the boundary triplet mappings which results in the following boundary triplet
\[
\hat \Gamma_0^V((x_e)_{e\in E})=\big\{\sum_{e\sim v, (e,k)\sim v}{\rm sgn}(e,v)I_e(k\ell_e)\big\}_{v\in V},\quad  ,\quad \hat \Gamma_1^V((x_e)_{e\in E}):=(-\i U(v))_{v\in V}.
\]
Here the resulting operator 
\[
\hat A:=\mathfrak{A}H|_{\ker \hat \Gamma_0^V}=\begin{bmatrix}
-G&-\hat D^*\\-\hat D&-R
\end{bmatrix}
\]
does in general not generate an exponentially stable semigroup, since $\hat D$
 has a one-dimensional kernel spanned by the function $c\in (\oplus_{e\in E}H^1([0,\ell_e])) ^2$ which is constant in the first $|E|$ entries which correspond to the voltages and zero in the remaining entries which correspond to the currents. Hence if $\ker G_e(\xi)$ does not contain the constant function for all $\xi$ from some non-empty interval and all edges $e\in E$, then $\begin{smallbmatrix}
 G\\\hat D
 \end{smallbmatrix}$ is injective and therefore Proposition \ref{prop:exp_stable} implies the exponential stability of the semigroup $\{\hat T(t)\}_{t\geq 0}$ generated by $\hat A$. Otherwise, $\ker \hat A=\Span\{c\}$ and Corollary~\ref{cor:hypocoercive} yields the existence of $M,\alpha>0$ such that the following holds:
 \[
\left\|\hat T(t)x_0-\frac{\langle x_0,c\rangle_{L^2}}{\|c\|_{L^2}}c\right\|_{L^2}\leq Me^{-\alpha t}\left\|x_0-\frac{\langle x_0,c\rangle_{L^2}}{\|c\|_{L^2}}c\right\|_{L^2}.
 \]
 
 Alternatively, for the current controlled case, we assume that $G$ is uniformly positive instead of $R$. Then one can use the loop based formulation which requires a set of fundamental circles $\mathcal{C}$ and constant currents $I(C)$ for all $C\in\mathcal{C}$
\begin{align}
\label{eq:current_control}
\Gamma_0^E((x_e)_{e\in E})=\big\{I(C)\big\}_{C\in \mathcal{C}},\quad  \quad \hat \Gamma_1^E((x_e)_{e\in E}):=\left(\i \sum_{e\in C, (k,l)\sim e} \sgn(e,C)U(k\ell_e)\right)_{C\in\mathcal{C}}.
\end{align}
However, in comparison to the voltage controlled case, we cannot directly conclude the injectivity of the resulting operator 
\[
\tilde A:=\mathfrak{A}H|_{\ker \hat \Gamma_0^E}=\begin{bmatrix}
-G&-\tilde D^*\\-\tilde D&-R
\end{bmatrix}.
\]
To achieve injectivity we need the additional assumption that a zero current on each fundamental cycle implies that the current is identically zero on every edge. Sufficient for this is that every edge is contained in a fundamental cycle. Furthermore, $\tilde A$ has a compact resolvent and hence, it generates an exponentially stable semigroup.

We consider now networks with $R_e=G_e=0$ for all $e\in E$ together with the following boundary conditions
\[
-\i \sum_{e\sim v, (e,k)\sim v}{\rm sgn}(e,v)I_e(k\ell_e)=-\i r_v U(v)
\]
for some $r_v\geq 0$ and all $v\in V$, which describe loads attached to the network nodes. Then for $L:=\diag (-\i r_v)_{v\in V}$ the matrices $-\i L=\diag (-r_v)_{v\in V}=\i L^*$ are dissipative. Hence, by Proposition~\ref{prop:node}, these boundary conditions lead to generators of contraction semigroups. This implies the generation of contraction semigroups also for all networks satisfying $R_e=G_e\geq 0$. 

\subsection*{Prosumer network example}
We conclude this section by applying our modeling approach to the small example network shown in Figure~\ref{fig:grid}. The network consists of five transmission lines $S_1,\ldots,S_5$ with lengths 
$\ell_1,\ldots,\ell_5>0$, four prosumers $P_1,\ldots, P_4$ which can either produce or consume power from the network. The underlying graph $G$ has four nodes $v_1,v_2,v_3,v_4$ and five edges $e_1,\ldots,e_5$ as shown in Figure~\ref{fig:grid}. 

The system is an AC three-phase system and the currents and voltages $I_i,U_i\in\R^3$ along the line $S_i$ with  $i=1,\ldots,5$ are modeled using the telegraph equations \eqref{eq:tele}. Furthermore, we do not consider a static load model but instead, we consider both the generators and the loads to act as (time-varying) inputs $u_{P_1},\ldots u_{P_4}$ to the system. Here we focus on the following two cases: either the generators and loads provide all voltage inputs or they provide all current inputs. In the voltage controlled case, the currents of the loads and the generators are considered formally as outputs $y_{P_1}, \ldots, y_{P_4}$ of the system, whereas in the current controlled case, the voltages are considered as outputs. The reasoning behind this is that the value of the output variables is already determined by the Kirchhoff current law and the Kirchhoff voltage law, respectively.

If we consider the voltage controlled case, then we equate the  voltages at the 
endpoints of transmission lines at the interconnection points. In the example network in Figure~\ref{fig:grid} we have four nodes which lead to the following four equations
\begin{align*}
U_{S_1}(t,0)=U_{S_4}(t,\ell_4)=U_{S_5}(t,0)=u_{P_1}(t), \\
U_{S_1}(t,\ell_1)=U_{S_2}(t,0)=u_{P_2}(t),\\
U_{S_2}(t,\ell_2)=U_{S_3}(t,0)=U_{S_5}(t,\ell_5)=u_{P_3}(t),\\
U_{S_3}(t,\ell_3)=U_{S_4}(t,0)=u_{P_4}(t).
\end{align*}
The common node voltages are used in the formulation of the boundary control system analogous to \eqref{eq:TL_1} and \eqref{eq:TL_2}
\[
\dot{z}=\mathfrak{A}Hz,\quad  u(t)=\begin{smallbmatrix} U_{S_1}(t,0)\\
    U_{S_2}(t,0)\\
    U_{S_3}(t,0)\\
    U_{S_4}(t,0) \end{smallbmatrix}, \quad y(t)=\begin{smallbmatrix}
        I_{S_1}(t,0)+I_{S_5}(t,0)-I_{S_4}(t,\ell_4)\\ I_{S_2}(t,0)-I_{S_1}(t,\ell_1)\\  I_{S_3}(t,0)-I_{S_2}(t,\ell_2)-I_{S_5}(t,\ell_5)\\ I_{S_4}(t,0)-I_{S_3}(t,\ell_3)
    \end{smallbmatrix},
\]
where the output $y(t)$ is the current of the prosumer and the output equation reflects Kirchhoffs current law at the prosumer nodes.

For the current controlled case, we consider the loop based formulation which is based on the spanning tree shown containing the edges $e_1,e_4$ and $e_5$ in Figure~\ref{fig:grid} and the induced fundamental cycles $C_1$ and $C_2$. Observe that the graph has the property that every edge is contained in a fundamental cycle. In particular, as we have discussed earlier in this section, we obtain the exponential stability of the underlying semigroup. The formulation of the current controlled system based on \eqref{eq:current_control} is straightforward and therefore omitted.
	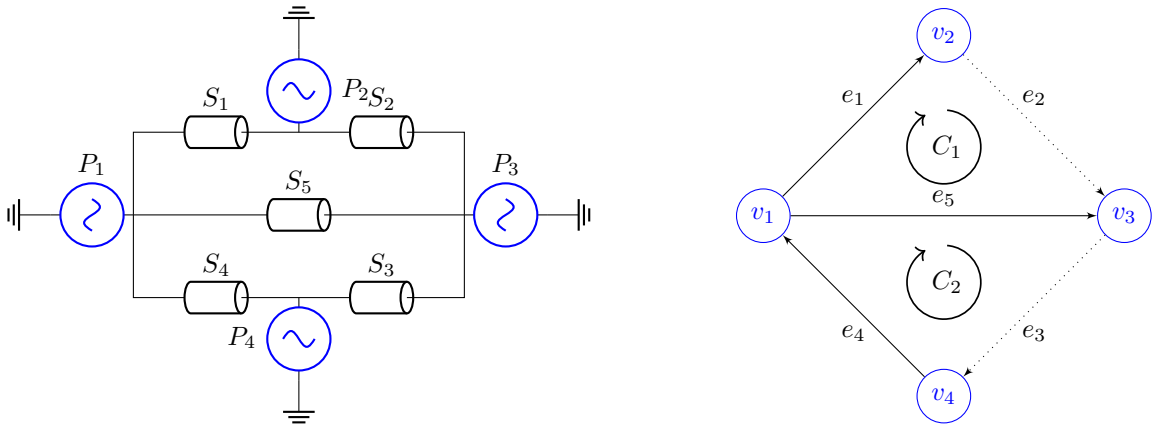
\begin{figure}[htpb!]
	         \centering
     \begin{subfigure}[b]{0.5\textwidth}
   \begin{center}
		\begin{circuitikz}[european,scale=0.55]\draw
			(1,0) node[ ground,rotate=270,label={[font=\normalsize]above:} ]{} (1,0);
			\draw[black](1,0) to[vsourcesin,color=blue,l =$P_1$](3,0);
			\draw[black](3,0)
			to[transmission line,color=black,l =$S_5$] (11,0); \draw[black] (11,0) to[vsourcesin,color=blue,l=$P_3$](13,0); \draw[black] (13,0) node[ground,rotate=90,label={[font=\normalsize]above:}]{} (14,0);
			\draw[black] (3,0) -- (3,2)
			to[transmission line, l=$S_1$,color=black] (7,2)
			to[transmission line,l=$S_2$,color=black] (11,2)
			-- (11,0);
			\draw[black]
			(3,0)-- (3,-2)
			to[transmission line, l=$S_4$,color=black] (7,-2)
			to[transmission line, l=$S_3$,color=black] (11,-2)
			--(11,0)	;
			\draw (7,4) node[ground,rotate=180,label={[font=\normalsize]right:}]{} (7,4);
			\draw[black] (7,4)
			to[vsourcesin,color=blue,l=$P_2$](7,2);
			\draw (7,-4) node[ground,label={[font=\normalsize]right:}]{}(7,-4);
			\draw[black](7,-4)
			to[vsourcesin,color=blue, l=$P_4$](7,-2);
		\end{circuitikz}
	\end{center}
     \end{subfigure}
     \hfill
     \begin{subfigure}[b]{0.4\textwidth}
   \begin{center}
		\begin{tikzpicture}[scale= 0.6]
	
	\tikzset{vertex/.style = {shape=circle,draw,minimum size=1.5em}}
	\tikzset{edge/.style = {->,> = latex'}}
	% vertices
	\node[vertex,blue] (b) at  (0,0.25) {$v_1$};
	\node[vertex,blue] (c) at  (8,0.25) {$v_3$};
	\node[vertex,blue] (a1) at (4,4.25) {$v_2$};
	\node[vertex,blue] (a2) at (4,-3.75) {$v_4$};
	\node[] (c1) at (4,1.75) {\CricArrowRight{$C_1$}};
	\node[] (c2) at (4,-1.25) {\CricArrowRight{$C_2$}};
	%edges
	\draw[edge] (b) -- (a1) node[midway,above=4pt]{$e_1$};
	\draw[edge] (a2) -- (b) node[midway,below= 4pt]{$e_4$};
	\draw[edge] (b) -- (c) node[midway,above]{$e_5$};
	\draw[edge,dotted] (c) -- (a2) node[midway,below= 4pt]{$e_3$};
	\draw[edge,dotted] (a1) -- (c) node[midway,above=4pt]{$e_2$};

	\end{tikzpicture}
	\end{center}
     \end{subfigure}
    \caption{The underlying graph of the transmission network consists of four nodes $v_1,\ldots,v_4$ and five edges  $e_1,\ldots, e_5$. Furthermore, we have a spanning tree that consists of the edges $e_1,e_5$, and $e_4$ and two fundamental cycles $C_1$ and $C_2$.}
    \label{fig:grid}
	\end{figure}

\section*{Acknowledgement}
We would like to thank Mikael Kurula, Friedrich Philipp, Jonathan Rohleder and Nathanael Skrepek for their valuable discussions on this draft. This work was funded by the Deutsche Forschungs Gemeinschaft within the Priority Programme 1984 and the Collaborative Research Center 910: Control of self-organizing nonlinear systems. Furthermore, HG acknowledges the funding from the Wenner Gren Foundation. DH has been supported by Bundesministerium für Bildung und Forschung (BMBF)  EKSSE: Energieeffiziente Koordination und Steuerung des Schienenverkehrs in Echtzeit (grant no. 05M22KTB).

\bibliographystyle{alpha}

\bibliography{references}

\end{document}